\renewcommand*{\backref}[1]{}
\renewcommand*{\backrefalt}[4]{%
    \scriptsize%
    {
    \ifcase #1 (\textcolor{red}{Uncited.})%
          \or (Cited\ on p.~#2).%
          \else (Cited\ on pp.~#2).%
    \fi%
    }
}
\title{Localizing and colocalizing subcategories on schemes}
\author[L. Alonso]{Leovigildo Alonso Tarr\'{\i}o \orcidlink{0000-0002-6896-0652}}
\address[L. A. T.]{CITMAGA\\
Departamento de Matem\'a\-ticas\\
Universidade de Santiago de Compostela\\
E-15782  Santiago de Compostela, Spain}
\email{leo.alonso@usc.es}
\author[A. Jerem\'{\i}as]{Ana Jerem\'{\i}as L\'opez \orcidlink{0000-0001-7964-1334}}
\address[A. J. L.]{CITMAGA\\
Departamento de Matem\'a\-ticas\\
Universidade de Santiago de Compostela\\
E-15782  Santiago de Compostela, Spain}
\email{ana.jeremias@usc.es}
\author[E. Loureiro]{Eduardo Loureiro Novo \orcidlink{0009-0009-6001-1928}}
\address[E. L. N.]{CITMAGA\\
Departamento de Matem\'a\-ticas\\
Universidade de Santiago de Compostela\\
E-15782  Santiago de Compostela, Spain}
\email{loureironovoe@gmail.com}
\thanks{This work has been partially supported by Xunta de Galicia's ED431C 2023/31 project with E.U.'s FEDER funds.}
\subjclass[2020]{14F08 (primary); 18N55 (secondary)}
\date{June 5, 2026,
 \emph{typeset}, \today}
\theoremstyle{plain}
\newtheorem{theorem}{Theorem}[section]
\newtheorem{proposition}[theorem]{Proposition}
\newtheorem{lema}[theorem]{Lemma}
\newtheorem{lemma}[theorem]{Lemma}
\newtheorem{corollary}[theorem]{Corollary}
\theoremstyle{remark}
\newtheorem{remark}[theorem]{Remark}
\newtheorem*{remark*}{Remark}
\theoremstyle{definition}
\newtheorem{definition}[theorem]{Definition}
\newtheorem*{ack}{Acknowledgements}
\newtheorem{cosa}[theorem]{}			
\numberwithin{equation}{theorem}
\newcommand{\CA}{\mathcal{A}}
\newcommand{\CC}{{\mathcal C}}
\newcommand{\CE}{{\mathcal E}}
\newcommand{\CF}{{\mathcal F}}
\newcommand{\CG}{{\mathcal G}}
\newcommand{\CH}{{\mathcal H}}
\newcommand{\CJ}{{\mathcal J}}
\newcommand{\CK}{{\mathcal K}}
\newcommand{\CM}{\mathcal{M}}
\newcommand{\CN}{\mathcal{N}}
\newcommand{\CO}{\mathcal{O}}
\newcommand{\CP}{\mathcal{P}}
\newcommand{\CQ}{\mathcal{Q}}
\newcommand{\CT}{\mathcal{T}}
\newcommand{\CV}{\mathcal{V}}
\newcommand{\CW}{\mathcal{W}}
\newcommand{\SA}{\mathsf{A}}
\newcommand{\SC}{\mathsf{C}}
\newcommand{\Col}{\mathsf{Col}}
\newcommand{\SD}{\mathsf{D}}
\newcommand{\SL}{\mathsf{L}}
\newcommand{\Loc}{\mathsf{Loc}}
\newcommand{\Locset}[1]{\mathsf{Loc}({#1})}
\newcommand{\Locsett}[1]{\mathsf{Loc}_{\otimes}({#1})}
\newcommand{\Colset}[1]{\mathsf{Col}({#1})}
\renewcommand{\SS}{\mathsf{S}}
\newcommand{\ST}{\boldsymbol{\mathsf{T}}}
\newcommand{\SY}{\mathsf{Y}}
\newcommand{\SZ}{\mathsf{Z}}
\newcommand{\Ab}{\mathsf{Ab}}
\newcommand{\D}{\boldsymbol{\mathsf{D}}}
\newcommand{\K}{\boldsymbol{\mathsf{K}}}
\newcommand{\LL}{\boldsymbol{\mathsf{L}}}
\newcommand{\R}{\boldsymbol{\mathsf{R}}}
\newcommand{\A}{\mathsf{A}}
\newcommand{\NN}{\mathbb{N}}
\newcommand{\ZZ}{\mathbb{Z}}
\newcommand{\ia}{{\mathfrak a}}
\newcommand{\ip}{{\mathfrak p}}
\newcommand{\im}{{\mathfrak m}}
 \newcommand{\hocolim}[1]{\begin{array}[t]{c} {\rm hocolim}\\[-7.5 pt]
 {\lto} \\[-7.5 pt] {\scriptstyle {#1}} \end{array}}
\newcommand{\dirlim}[1]{\begin{array}[t]{c} {\rm lim}\\[-7.5 pt]
 {\longrightarrow} \\[-7.5 pt] {\scriptstyle {#1}} \end{array}}
\newcommand{\lto}{\longrightarrow}
\newcommand{\inc}{\hookrightarrow}
\newcommand{\limp}{\Longrightarrow}
\newcommand{\ldimp}{\Longleftrightarrow}
\DeclareMathOperator{\Hom}{Hom}
\DeclareMathOperator{\shom}{\CH\mathit{om}}
\DeclareMathOperator{\rshom}{\R\!\shom}
\DeclareMathOperator{\dhom}{\CH\mathsf{om}}
\DeclareMathOperator{\rhom}{\R{}Hom}
\DeclareMathOperator{\Img}{Im}
\DeclareMathOperator{\spec}{Spec}
\DeclareMathOperator{\supp}{supp}
\DeclareMathOperator{\h}{H}
\DeclareMathOperator{\id}{id}
\newcommand{\qc}{\mathsf{qc}}
\newcommand{\op}{\mathsf{op}}
\newcommand{\HOMqc}{\R\dhom^{\qc}}
\newcommand{\productoqc}{\prod^{{}^{\qc}}}
\newcommand{\coherador}{\gamma^{\qc}}
\DeclareMathOperator{\Spec}{Spec}
\newcommand{\ie}{{\it i.e.~}}
\newcommand{\lc}{{\it loc.cit.\/} }
\begin{document}

\begin{abstract}  A full triangulated subcategory $\SL \subset \ST$ of triangulated category $\ST$ is \emph{localizing} if it is stable for coproducts. If, further,  $\ST$ is $\otimes$-triangulated, we say that $\SL$ is $\otimes$-ideal if $F \otimes G \in \SL$ for all $G \in \SL$ and all $F \in \ST$. Analogously, a full triangulated subcategory $\SC \subset \ST$ is \emph{colocalizing} if it is stable for products. If, further,  $\ST$ is \emph{closed}, \ie $\otimes$-triangulated with internal homs (denoted $[-,-]$), we say that $\SC$ is $\CH$-coideal if $[F, G] \in \SC$ for all $G \in \SC$ and all $F \in \ST$.

For a point generated concentrated scheme $X$, we prove that all $\otimes$-ideal localizing subcategories of $\D_{\qc}(X)$ are classified by the subsets of $X$. As a consequence, we prove that for $\CH$-coideal colocalizing subcategories of $\D_{\qc}(X)$ the same holds. Moreover, every such colocalizing subcategory $\SC$ is of the form $\SC = \SL^\perp$, where $\SL$ is a $\otimes$-ideal localizing subcategory of $\D_{\qc}(X)$. 
\end{abstract}

\maketitle
\tableofcontents

\section*{Introduction}

In the seminal paper \cite{Nct}, Neeman proved that the localizing subcategories of the derived category $\D(R)$ of a commutative noetherian ring $R$ are classified by the subsets of $\Spec(R)$. This classification was inspired by the chromatic tower in stable homotopy and previous ideas by Bousfield, Hopkins and Ravenel. That paper inaugurated a flurry of activity in generalizing these results to several contexts, one of the most outstanding is the geometric case of schemes. Later Neeman, in \cite[Corollary 2.8]{Ncs}, classified colocalizing subcategories in $\D(R)$. There has been several generalizations of Neeman's results. 

One key observation is that both the classification theorems and its generalizations rely strongly on the noetherian hypothesis. As a first attempt to classify colocalizing subcategories on schemes, we tried to generalize Neeman's strategy using global Koszul complexes (which exist on divisorial schemes), this strategy depends again on the noetherian hypothesis. In the present paper, we propose a different approach to these results taking advantage of the existence of a clear notion of point in the context of schemes. As long as the residue fields determine the derived category, we will see that the analogues of Neeman's theorems hold and the proofs are actually pretty natural.

Before delving deeper in our methods and results, let us comment the previous work known in this realm we are aware of. 

Let us start now with some details on the relevant definitions. A localizing subcategory of a triangulated category $\ST$ is a full triangulated subcategory of $\ST$ stable for coproducts. These categories are important because they arise as kernels of localization functors. When $\ST = \D(\SA)$ is the derived category of a Grothendieck category $\SA$, a localizing subcategory generated by a set of objects in $\D(\SA)$ is the kernel of a localization functor (see \cite[Theorem 5.7]{AJS1}). A \emph{colocalizing subcategory} is, dually, a full triangulated subcategory stable for \emph{products}. It is not known if there is a condition on the generation of a colocalizing subcategory to be the kernel of a colocalization functor.

The initial boost on this topic was \cite{Nct}. In this paper, Neeman proved that the localizing subcategories of the derived category $\D(R)$ of a commutative noetherian ring $R$ are classified by the subsets of $\Spec(R)$. In 2004, Alonso, Jeremías and Souto in \cite{AJS3} generalized this result to the context of noetherian schemes, showing that the localizing subcategories in $\D_{\qc}(X)$ (the derived category of complexes of sheaves on $X$ with quasi-coherent homology) that are $\otimes$-ideal, are in bijection with the subsets of $X$. Later, a different proof was given by Stevenson in 2013 \cite[Corollary 8.13.]{St13}.

If $\SL \subset \ST$ is a localizing subcategory of a triangulated category $\ST$ then $\SL^\perp$ is obviously colocalizing. Therefore, when $\ST = \D(R)$, Neeman's result for $\D(R)$ already gives a bunch of colocalizing subcategories of $\D(R)$.  A natural question arose: Are there more colocalizing subcategories in $\D(R)$? The answer was provided by Neeman in 2011, \cite[Corollary 2.8]{Ncs}: There are no further colocalizing subcategories in $\D(R)$.

After these developments it has arisen the interest on classification of localizing and colocalizing subcategories in the context of tensor-triangulated geometry. The starting point was the paper by Benson, Iyengar and Krause \cite{bik}. Their results require the action of the derived category of a ring in the category under consideration, something that is not always possible in the case of schemes, even in the noetherian case.

More recently, Verasdanis in \cite[Theorem 6.9]{v}, addresses the question for a noetherian separated scheme with very different techniques from those used here. A preprint by Barthel, Castellana, Heard and Sanders deals with stratified and costratified tt-categories. One of its applications is \cite[Theorem 19.5]{bchs}. It provides a classification of certain colocalizing subcategories of $\D_{\qc}(X)$ when $X$ is a \emph{topologically weakly noetherian }concentrated scheme \emph{with} $\D_{\qc}(X)$ \emph{stratified}. The methods in \lc are those of tensor-triangulated geometry, thus a substitute for sheaves of residue fields is provided through the use of certain idempotent functors, determined by localization and acyclization functor associated to compact objects, that play the role ---but are quite different--- from true residue fields.

Now it is time to describe our classification results. We work in the context of concentrated (\ie quasi-compact and quasi-separated) schemes. Our  basic hypothesis is that the scheme is \emph{point generated}, \ie the category $\D_{\qc}(X)$ is generated by the sheaves of residue fields of its points. This notion is discussed in detail in \S \ref{sec3}. noetherian schemes are point generated (Theorem \ref{LocK(x)generan}) thus this hypothesis generalizes the previous results. There are examples by Neeman of non-noetherian schemes that are not point generated \cite{Oddball}. However, there are plenty of non-noetherian point generated zero dimensional affine schemes as the example by Stevenson in \cite{St14} shows.

Our classification of localizing subcategories (generalizing \cite[Theorem 4.12]{AJS3} for the case of usual schemes), is stated as follows:

\begin{theorem}
(\emph{Theorem \ref{ThclasB}})

Let $X$ be a point generated concentrated scheme. 
There is a bijection
\[
\left\{    
    \begin{gathered}    
        \text{ Localizing }\\
        \otimes\text{-ideals of\/ } \D_{\qc}(X)
    \end{gathered}
\right\}
\xtofrom[\Loc]{\SZ}
\left\{  
    \begin{gathered}    
        \text{ Subsets of }\\
        X
    \end{gathered}
\right\}
\]
where the map $\Loc$ takes a subset $Z \subset X$ to the localizing subcategory $\Locset{Z}$ generated by the set of sheaves $\{\CK(x) \,\,|\,\, x \in Z\}$ and $\SZ$ takes a $\otimes$-ideal localizing subcategory
$\SL$ to the set of points $\SZ(\SL) := \{\,x\in X \,\,|\,\, \CK(x)\in \SL\,\}$.

\end{theorem}

The property to be a $\otimes$-ideal characterizes thus those localizations deter\-mi\-ned by subsets of underlying set of a scheme. In the context of affine schemes, \ie of derived categories of rings, this issue does not arise because all localizing subcategories are $\otimes$-ideals, see Lemma \ref{allideals}.

In the case of schemes a subcategory right orthogonal for the $\rhom$ pairing to a $\otimes$-ideal localizing subcategory is an $\CH$-coideal colocalizing subcategory. As in the case of localizing subcategories all colocalizing subcategories of the derived category of a ring are $\CH$-coideals by Lemma \ref{allcoideals}. These are classified in the second main Theorem of the present paper. Specifically,

\begin{theorem}
(See \emph{Theorem \ref{ThclasCB}}).

Let $X$ be a point generated concentrated scheme. 
There is a bijection
\[
\left\{    
    \begin{gathered}    
        \text{ Colocalizing }\\
        \CH\text{-coideals of\/ } \D_{\qc}(X)
    \end{gathered}
   \right\}
\xtofrom[\Col]{\SY}
\left\{  
\begin{gathered}    
        \text{ Subsets of }\\
        X
    \end{gathered}
   \right\}
\]
where the map $\Col$ takes a subset $Y \subset X$ into the colocalizing subcategory $\Colset{Y}$ generated by the set of sheaves $\{\,\CK(x) \,\,|\,\, x \in Y\,\}$ and $\SY$ assigns to an $\CH$-coideal colocalizing subcategory $\SC$, the set  $\SY(\SC) := \{\,x\in X \,\,|\,\, \CK(x)\in \SC\,\}$.
\end{theorem}

An interpretation of Theorems 0.1 and 0.2, is that taking orthogonals to localizing  and colocalizing subcategories of $\D_\qc(X)$ is expressed to taking complements in $X$ in the following sense, for $Z \subset X$ and $Y = X \setminus Z$ we have, with the previous notations, that
\[
\Locset{Z}^\perp = \Colset{Y} \qquad \Locset{Z}={}^\perp\Colset{Y}.
\] 

Our assumption that the schemes are point generated not only gives a broader class of cases covered but also allowed us to give simpler proofs, already for the previously considered classification of $\otimes$-ideal localizing subcategories. Also, the present proof of the classification of localizing and colocalizing subcategories are simpler and shorter than those in \cite{Nct} and \cite{Ncs} for a noetherian affine scheme $X = \Spec(R)$.

Let us discuss briefly the organization of the paper. In the first section we establish the basic definitions on schemes and their derived categories of complexes of sheaves with quasi-coherent homology. In section \ref{sec2} we give the definitions of localizing and colocalizing subcategories and  establish the notation for the sheaves associated to residue fields of points. We study the basic behavior of residue fields under tensor product and the complex of homomorphisms. We also remark that such localizing categories are associated to a localization functor. As mentioned before, in section \ref{sec3} we introduce the notion of point generated scheme and check that noetherian schemes are point generated.

The remaining sections are devoted to the main results. In section \ref{sec4} we give a direct proof of the characterization of $\otimes$-ideal localizing subcategories. In section \ref{sec5} we prove the result that extends the main one in \cite{Ncs}. We classify the colocalizing subcategories of $\D_{\qc}(X)$ that are $\CH$-coideal by putting them in bijection with subsets of $X$. The complementary subset gives a $\otimes$-ideal localizing subcategory whose acyclization corresponds to the former colocalizing subcategory. 

The last section \ref{sec6} is an addendum putting in perspective our condition of point generation. We show that it sits strictly in between the noetherian condition and the support property, \ie that complexes with empty (small) support are zero.

\begin{ack}
 We thank comments and pointers to the recent literature given to us by Rudradip Biswas, Natalia Castellana and Amnon Neeman.
\end{ack}

\section{Geometric preliminaries}

We recall some basic results about quasi-coherent sheaves on general concentrated schemes. Recall that a scheme $X$ is \emph{concentrated} if it quasi-compact scheme and quasi-separated (\ie the intersection of two quasi-compact open subsets in $X$ is quasi-compact). Following \cite[\S6.1]{GD}, we say that $f\colon X\to Y$ is a quasi-compact morphism  if for every quasi-compact open subset $U \subset X$, $f^{-1}(U)$ is quasi-compact. Also we say that $f$ is quasi-separated if its diagonal embedding $\delta_f \colon X \to X \times_Y X$ is quasi-compact. We call a morphism $f$ \emph{concentrated} if it is quasi-compact and quasi-separated. Notice that if $Y$ is concentrated then a morphism $f\colon X\to Y$  is concentrated if and only if $X$ is concentrated.

For a scheme $X$, $\SA(X)$ will denote the category of sheaves of $\CO_X$-Modules and $\A_\qc(X)$ the full subcategory of quasi-coherent shaves of $\CO_X$-Modules. The category of quasi-coherent sheaves defined on quasi-compact, quasi-separated schemes is well-behaved. Both categories $\A(X)$ and $\A_\qc(X)$ are Grothendieck categories. Their corresponding derived categories will be denoted by $\D(X)=\D(\A(X))$ and $\D(\A_\qc(X))$, respectively. We also denote by $\D_\qc(X)$ the full subcategory of $\D(X)$ whose objects are the complexes of sheaves of $\CO_X$-Modules with quasi-coherent homology.

\begin{cosa}\label{cohfunc}
\textbf{Complexes of quasi-coherent sheaves and the coherator functor}.
In our setting, the inclusion $\mathsf{i} \colon \A_\qc(X) \to \A(X)$ has a right adjoint $Q_X \colon \A(X) \to \A_\qc(X)$ called the \emph{coherator} functor (see \cite[p. 187, Lemme 3.2]{erg} or, for another treatment, \cite[Appendix B]{tt}). The inclusion $\mathsf{i} \colon \A_\qc(X) \to \A(X)$ is an exact functor so it extends to a $\Delta$-functor ${\mathsf{i}} \colon \D(\A_\qc(X)) \to \D(X)$. By \cite[p. 187, Theorem 5.4]{AJS1}, in the category $\K(X)$ there are $K$-injective resolutions, so it is possible to define the right derived functor $\R{}Q_X \colon \D(X) \to \D(\A_\qc(X))$, the right adjoint of the inclusion ${\mathbf{\mathsf{i}}} \colon \D(\A_\qc(X)) \to \D(X)$.  This functor 
factors through $\D_\qc(X)$. 
If the concentrated scheme $X$ is furthermore semi-separated (that is, the intersection of two affine opens of $X$ is still affine), $\mathsf{i} \colon \D(\A_\qc(X)) \to \D_\qc(X)$ is an equivalence of categories with quasi-inverse  $\R{}Q_X$ (\emph{cf.} \cite[Corollary 5.5]{BN} or \cite[Proposition (1.3)]{AJL}). In particular for an affine scheme $X = \Spec(R)$, we have that $\D_\qc(X) \cong \D(R)$. Along the paper we will identify both categories without further mention.
\end{cosa}

\begin{cosa} \label{prodqc}
\textbf{Complexes of sheaves with quasi-coherent homology}.
Let us asume that $X$ is a general concentrated scheme.  The full subcategory $\D_\qc(X)\subset \D(X)$ is a triangulated subcategory closed for coproducts, that is,  it is a localizing subcategory of $\D(X)$. Furthermore,  $\D_\qc(X)$ is compactly generated \cite[Theorem 3.1.1]{BB}{\footnote{The compact objects in $\D_\qc(X)$ are precisely the perfect complexes (see, for instance, \cite[Proposition 4.7]{asht}). Let us recall that a complex $\CP\in \D(X)$ is  perfect if it is locally quasi-isomorphic to a bounded complex of vector bundles. The result in \cite[Theorem 3.1.1]{BB} is more precise establishing that $\D_\qc(X)$ is generated by a single perfect complex.
}}.
By \cite[Theorem 5.7]{AJS1} (using \cite[Propositions~1.6 and 1.7]{AJS1}), the inclusion $\D_\qc(X)\subset \D(X)$ possesses a right adjoint, that is, there exists an \emph{acyclization functor} $\coherador\colon  \D(X)\to \D(X)$ such that for $\CF\in \D(X)$ it holds that $\coherador(\CF)=\CF$ if and only if $\CF\in \D_\qc(X)$.
In particular, the category $\D_\qc(X)$ has all products. Given $\{\CF_i\,\,|\,\, i\in I\}$ a family of objects in $\D_\qc(X)$, $\coherador \prod_{i\in I}\CF_i$ is the product in $\D_\qc(X)$, that we will denote by $\productoqc_{i\in I}\CF_i$. Note that the inclusion functor $\iota\colon \D_\qc(X)\to \D(X)$ preserves all coproducts but in general it does \emph{not} preserve all products. 
\end{cosa}

\begin{cosa}\label{monstr}
\textbf{Monoidal closed structure in $\D(X)$}.
For any scheme $X$, the category $\D(X)$ is endowed with a structure of \emph{symmetric monoidal category}, the left derived tensor product,
 \[
 -\otimes^{\LL}_{X}-\colon \D(X)\times \D(X)\lto \D(X),
\] 
is the \emph{tensor product} structure, and $\CO_X$ is the unit object.  Furthermore, $\D(X)$ is a \emph{closed category} with the \emph{internal hom} structure defined by the right derived sheaf-hom functor
\[
\rshom_X (-, -) \colon \D(X)^{\op}\times \D(X)\lto \D(X).
\] 
The natural adjunction isomorphism
\[
\Hom_{\D(X)} (\CA\otimes^{\LL}_{X} \CF,\CG) \cong \Hom_{\D(X)}(\CA, \rshom_X (\CF,\CG)) \quad (\CA ,\, \CF,\, \CG \in\D(X)),
\]
induces an $\Delta$-functorial isomorphism
\[
\R\shom_{X} (\CA\otimes^{\LL}_{X} \CF,\CG) \cong \R\shom_{X}(\CA, \rshom_X (\CF,\CG)) \quad (\CA ,\, \CF,\, \CG \in\D(X)).
\]
For a detailed discussion on these structures and its consequences see \cite[Chapters 2 and  3 (\S 3.4)]{yellow}.
\end{cosa}

\begin{cosa}\label{ayuda1}
\textbf{Monoidal closed structure in $\D_\qc(X)$}.
By \cite[Proposition~3.9.1]{yellow}, if $\CF,\,\CG\in \D_\qc(X)$ then   $\CF\otimes^{\LL}_{X} \CG\in \D_\qc(X)$. So the structure of symmetric monoidal category of $\D(X)$ defines on  $\D_\qc(X)$ a structure of \emph{symmetric monoidal category}.

The internal hom in $\D_\qc(X)$ is the $\Delta$-bifunctor
\[
\HOMqc_X(\,-\,,\,-\,)\colon \D_\qc(X)^{\op} \times  \D_\qc(X) \lto  \D_\qc(X),
\]
defined, for $\CF,\,\CG\in \D_\qc(X)$, by
\[
\HOMqc_X(\CF,\CG) := \coherador  \rshom_X (\CF, \CG).
\]
Indeed, if $X$ is a concentrated scheme, the natural adjunction isomorphism
\[
\Hom_{\D(X)} (\CA\otimes^{\LL}_{X} \CF,\CG) \cong \Hom_{\D(X)}(\CA, \rshom_X (\CF,\CG))
\]
induces an isomorphism
\[
\Hom_{\D(X)}(\CA\otimes^{\LL}_{X}\CF,\CG) \cong \Hom_{\D(X)}(\CA, \coherador \rshom_X (\CF,\CG)).
\]
for all $\CA \in\D_\qc(X)$ and any $ \CF,\, \CG \in\D(X)$. As a consequence we obtain the desired $\Delta$-functorial isomorphism
\[
\Hom_{\D(X)} (\CA\otimes^{\LL}_{X} \CF,\CG) \cong \Hom_{\D(X)}(\CA, \HOMqc_X (\CF,\CG)) \quad (\CA ,\, \CF,\, \CG \in\D_\qc(X)).
\]
As before, it induces an $\Delta$-functorial isomorphism
\[
\HOMqc_X (\CA\otimes^{\LL}_{X} \CF,\CG) \cong \HOMqc_X(\CA, \HOMqc_X (\CF,\CG)) \quad (\CA ,\, \CF,\, \CG \in \D_\qc(X)).
\]

The following lemma is an immediate consequence of the existence of the adjunction isomorphisms.
\end{cosa}

\begin{lemma} For a concentrated scheme $X$ and for any $\CF \in \D_\qc(X)$, the functor $\HOMqc_X(\CF,-) \colon \D_\qc(X)\to \D_\qc(X)$ preserves products, and the contravariant functor $\HOMqc_X(-,\CF) \colon \D_\qc(X)\to \D_\qc(X)$ transforms coproducts into products.
\end{lemma}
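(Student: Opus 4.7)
The plan is to derive both statements from the adjunction isomorphism
\[
\Hom_{\D_\qc(X)}(\CA\otimes^{\LL}_{X}\CF,\CG) \cong \Hom_{\D_\qc(X)}(\CA, \HOMqc_X(\CF,\CG))
\]
established in \ref{ayuda1}, together with Yoneda's lemma in $\D_\qc(X)$.

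For the first assertion, the adjunction exhibits $\HOMqc_X(\CF,-)$ as a right adjoint to the functor $-\otimes^{\LL}_{X}\CF\colon \D_\qc(X)\to\D_\qc(X)$. Since right adjoints preserve all existing limits, and products exist in $\D_\qc(X)$ (as recalled in \ref{prodqc}), $\HOMqc_X(\CF,-)$ preserves products.

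For the second assertion, fix a family $\{\CG_i\}_{i\in I}$ of objects in $\D_\qc(X)$ and an arbitrary test object $\CA\in\D_\qc(X)$. Using that $-\otimes^{\LL}_{X}\CA$ is a left adjoint and thus commutes with coproducts, we compute
\begin{align*}
\Hom_{\D_\qc(X)}\bigl(\CA, \HOMqc_X(\textstyle\coprod_{i}\CG_i,\CF)\bigr)
&\cong \Hom_{\D_\qc(X)}\bigl(\CA\otimes^{\LL}_{X}\textstyle\coprod_{i}\CG_i,\CF\bigr)\\
&\cong \Hom_{\D_\qc(X)}\bigl(\textstyle\coprod_{i}(\CA\otimes^{\LL}_{X}\CG_i),\CF\bigr)\\
&\cong \textstyle\prod_{i}\Hom_{\D_\qc(X)}(\CA\otimes^{\LL}_{X}\CG_i,\CF)\\
&\cong \textstyle\prod_{i}\Hom_{\D_\qc(X)}(\CA,\HOMqc_X(\CG_i,\CF))\\
&\cong \Hom_{\D_\qc(X)}\bigl(\CA,\textstyle\productoqc_{i}\HOMqc_X(\CG_i,\CF)\bigr),
\end{align*}
where the third isomorphism is the universal property of coproducts and the last one is the universal property of the product in $\D_\qc(X)$. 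The chain of isomorphisms is natural in $\CA$, so by Yoneda we obtain a canonical isomorphism $\HOMqc_X(\coprod_i\CG_i,\CF)\cong \productoqc_{i}\HOMqc_X(\CG_i,\CF)$ in $\D_\qc(X)$.

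There is no real obstacle: both claims are formal consequences of the adjunction, the only mild subtlety being the distinction between products in $\D(X)$ and products in $\D_\qc(X)$ pointed out in \ref{prodqc}, which forces one to work with $\productoqc$ rather than the naive product and to invoke Yoneda inside $\D_\qc(X)$ itself.
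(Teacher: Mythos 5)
Your proof is correct and follows exactly the route the paper intends: the paper gives no separate argument, stating just before the lemma that it is ``an immediate consequence of the existence of the adjunction isomorphisms,'' and your write-up simply makes that explicit (right adjoints preserve products; a Yoneda argument for the contravariant statement, correctly using $\productoqc$ rather than the product in $\D(X)$).
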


\begin{cosa}\label{inverse-direct}
{\textbf{Direct and inverse image functors}.}
For any scheme map $f\colon X\to Y$, it holds that $\LL f^*(\D_\qc (Y))\subset \D_\qc(X)$ \cite[Proposition~3.9.1]{yellow}.
If $f$ is concentrated then $\R f_*(\D_\qc (X))\subset \D_\qc(Y)$ \cite[Proposition~3.9.2]{yellow}, furthermore the bifunctorial projection map
\[
p\colon \CA\otimes^{\LL}_{\CO_X}\R f_*\CF\lto \R f_*(\LL f^*\CA\otimes^{\LL}_{\CO_X}\CF)
\]
is an isomorphism for all $\CA\in \D_\qc(Y)$ and $\CF\in \D_\qc(X)$ \cite[Proposition~3.9.4]{yellow}.

Let $f\colon X\to Y$ be a concentrated morphism of schemes, and let us assume that $X$ is concentrated. Let $\R f_*\colon \D_\qc(X)\to \D(Y)$ be the restriction of the right derived functor $\R f_*\colon \D(X)\to \D(Y)$.  The functor $\R f_*\colon \D_\qc(X)\to \D(Y)$ possesses a right $\Delta$-adjoint $f^\times\colon  \D(Y) \to \D_\qc(X)$, the twisted inverse image functor (\emph{cf.} \cite[$\S 4$]{yellow}\footnote{Alternatively, the category $\D_\qc(X)$ is compactly generated (\cite[Theorem 3.1.1]{BB}), the functor $\R f_*\colon \D_\qc(X)\to \D(Y)$ preserves coproducts \cite[Corollary (3.9.3.3)]{yellow} so, by Brown representability theorem (as in \cite[Theorem 3.1]{Ngd}) it has a right adjoint $f^\times\colon \D(Y)\to \D_\qc(X)$.}.
The adjunction isomorphism together with the projection isomorphism induces isomorphisms
\[
\Hom_{\D(Y)}\big(\CA, \R\shom_Y\big(\R f_*\CF, \CG \big)\big) \cong \Hom_{\D(Y)}\big(\CA,  \R f_* \R\shom_X\big(\CF, {f}^\times \CG\big) \big),
\]
for all $\CA,\,\CF\in \D_\qc(X)$ and $\CG\in \D(X)$ (\cite[Theorem~4.1.1]{yellow}).
So the adjunction between the restricted functors  $\R f_*\colon \D_\qc(X)\to \D_\qc(Y)$ and  $f^\times\colon \D_\qc(Y) \to \D_\qc(X)$  may be upgraded to an \emph{internal adjunction} between closed categories, that is, there is an unique $\Delta$-bifunctorial isomorphism
\[
\HOMqc_{Y}\big(\R f_*\CF, \CG \big) \cong  \R f_* \HOMqc_X\big(\CF, {f}^\times \CG \big),
\]
for $\CF\,\in \D_\qc(X)$ and $\CG\in \D_\qc(Y).$
\end{cosa}

\section{Localizing and colocalizing subcategories and points.}\label{sec2}

We remind the reader that $X$ stands for a concentrated scheme. 

A \emph{localizing} subcategory in $\D_{\qc}(X)$ is a full triangulated subcategory of $\D_{\qc}(X)$ closed under coproducts. Dually, a \emph{colocalizing} subcategory in $\D_{\qc}(X)$ is a full triangulated subcategory  of $\D_{\qc}(X)$ closed under products. 

By the well-known \emph{Eilenberg's swindle}, both localizing and colocalizing subcategories  are stable for direct summands. Indeed, let $\CF = \CF_1 \times \CF_2$ in $\D_{\qc}(X)$, using the ismorphism $\CF_1 \times \CF_2 \cong \CF_2 \times \CF_1$, there is a distinguished triangle $\D_{\qc}(X)$ 
 \[
 \CF_1 \lto \prod_{\mathbb N}{}^{\qc} \,\CF 
 \lto \prod_{\mathbb N}{}^{\qc} \,\CF \overset{+}\lto
 \]
where the second morphism corresponds to the canonical
projection 
\[
\CF_1 \times (\CF_2 \times \CF_1 \times \CF_2 \times \cdots) \lto \CF_2 \times \CF_1
\times \CF_2 \times \CF_1 \times \cdots\] 
whence $\CF_1$ belongs to the colocalizing subcategory that contains $\CF$. The argument is similar for localizing subcategories.
 
\begin{cosa}
\label{ortcoloc}
For  a class of objects $\SS \subset \D_{\qc}(X)$, let us denote by $\Loc(\,\SS\,)$ the smallest localizing subcategory of $\D_{\qc}(X)$ that contains $\SS$, and by $\Col(\,\SS\,)$ the smallest colocalizing subcategory containing $\SS$. We will say that $\Loc(\,\SS\,)$ is the \emph{localizing subcategory} of $\D_{\qc}(X)$ \emph{generated by}  $\SS$, and $\Col(\,\SS\,)$ is the \emph{colocalizing subcategory} of $\D_{\qc}(X)$ \emph{generated by} $\SS$.\footnote{Note that $\Loc(\,\SS\,)$ is also the smallest localizing subcategory of $\D(X)$ that contains $\SS$, because $\D_{\qc}(X)$ is a localizing subcategory of $\D(X)$ (see \ref{prodqc}) and $\SS \subset \D_{\qc}(X)$. But in general $\Col(\,\SS\,)$ is \emph{not} a colocalizing subcategory of $\D(X)$.}  These definitions agree in the affine case with Neeman's definition in \cite[(i) beginning \S 1]{Ncs}.

For any class of objects $\SS\subset\D_{\qc}(X)$, let us consider the corresponding orthogonal subcategories with respect to the $\Delta$-bifunctor
\[
 \R\Hom_{X}(-,-)\colon \D_{\qc}(X)\times \D_{\qc}(X) \rightarrow  \D(\Ab).
 \]
Let $\SS^\perp$ be the right ortogonal of $\SS$, and  ${}^{\perp}\SS$ be its left ortogonal in $\D_{\qc}(X)$:
\begin{align*}
\SS^\perp     & := \left\{ \CF\in\D_{\qc}(X) \,\,|\,\, \R\Hom_{X}(\CG,\CF)=0,\ \forall \CG \in \SS \right\},\\
{^{\perp}\SS} & := \left\{ \CF\in\D_{\qc}(X) \,\,|\,\, \R\Hom_{X}(\CF,\CG)=0,\ \forall \CG \in \SS \right\}.
\end{align*}

\noindent The right ortogonal $\SS^\perp$ is a colocalizing subcategory of $\D_{\qc}(X)$, and the left ortogonal ${}^{\perp}\SS$ is a localizing subcategory. Note that
\[
\Loc(\,{\SS}\,) \subset {}^\perp \big(\SS^\perp\big),\qquad\Col(\,{\SS}\,) \subset \big({}^\perp \SS\big)^\perp.
\]
If the embedding $\Loc(\,{\SS}\,)\subset \D_{\qc}(X)$ has a right adjoint then $\Loc(\,{\SS}\,)={}^\perp \big(\SS^\perp\big)$. Similarly, if the embedding ${}^\perp \SS\subset \D_{\qc}(X)$ has a left adjoint then $\Col(\,{\SS}\,) = \big({}^\perp \SS\big)^\perp$, this is discussed in \cite[Proposition 1.6]{AJS1}. For a criterion for the existence of adjoints, see \cite[Theorem 5.7]{AJS1}.
\end{cosa}

\begin{cosa}
Let us start by analyzing the simplest case, the affine scheme $X = \Spec(k)$ with $k$ a field. The category $\D(\spec(k))=\D_\qc(\spec(k))$ is $\D(k)$, the derived category of $k$-vector spaces. It is a trivial observation that any $k$-vector space is isomorphic to a direct sum of copies of $k$ and also it is a direct sumand of a product of copies of $k$. Furthermore a complex $T\in \D(k)$ is quasi-isomorphic to the complex $\bigoplus_{i\in \ZZ} H^iT[-i]$ with zero differentials. Note also that $T$ is quasi-isomorphic to the coproduct and to the product, 
\[
\bigoplus_{i\in \ZZ} H^iT[-i]=\prod_{i\in \ZZ} H^iT[-i],
\]
of the family of complexes $\big\{\,H^iT[-i]\,;\,i\in \ZZ\,\big\}$.
Therefore the localizing and the colocalizing subcategories of $\D(k)$ are the trivial ones.
\end{cosa}

\begin{cosa}
We will associate localizing and colocalizing subcategories of $\D_\qc(X)$ to subsets of points of  $X$, the underlying topological space of the scheme. To this end,  let
$j_x\colon \Spec(k(x))\to X$
be  the canonical map associated to the residual field $k(x)$ of a given  point $x\in X$,   and set
\[
\CK(x):=\R{j_x}_*k(x)={j_x}_*k(x).
\]
Note that $\CK(x)\in \D_\qc(X)$ is a complex concentrated in degree zero.

For a subset of points $W \subset X$, let  $\SS_W := \{\CK(x) \,/\, x \in W\}$ and let us abbreviate as follows the notation for the  smallest localizing and the smallest colocalizing subcategories of $\D_{\qc}(X)$ determined by the set of complexes $\SS_W$:
\[
\Locset{W} := \Loc(\,{\SS_W}\,), \quad \Colset{W} := \Col(\,{\SS_W}\,)
\]
\end{cosa}

Let us start studying the properties of the localizing and colocalizing subcategories of $\D_{\qc}(X)$ determined by  points. 

\begin{lema}\label{Homvanishes}
The points $x, y\in X$ are different if and only if $\CK(x)\otimes^{\LL}_{X}\CK(y)=0$,  if and only if $\,\R\Hom_{X}(\CK(x),\CK(y))=0$.
\end{lema}

\begin{proof} 
First, $\CK(x)$ is a direct summand of  $\CK(x) \otimes^{\LL}_{X} \CK(x)$, so $\CK(x) \otimes^{\LL}_{X} \CK(x) \neq 0$; and trivially, $\R\Hom_{X}(\CK(x), \CK(x)) \neq 0$.

For $x\neq y$, there exists an affine open subset $U \subset X$ such that
it only contains one of the points, for instance let us assume that
 $x \in U$ and $y \notin U$. Denote by $u : U
\inc X$ the  inclusion. The unit adjunction map $\CK(x)\to \R u_*u^* \CK(x)$ is an isomorphism. Now, using the projection isomorphism
\begin{align*}
      \CK(x) \otimes^{\LL}_{X} \CK(y)
          & \cong \R{}u_*u^*\CK(x) \otimes^{\LL}_{X} \CK(y)    \cong \R{}u_* (u^*\CK(x)
                                      \otimes^{\LL}_U u^* \CK(y)),
\end{align*}
we conclude that $\CK(x) \otimes^{\LL}_{X} \CK(y)=0$ because $u^*\CK(y) = 0$.

Given $\CN \in \D_\qc(X)$, let $\alpha$ be  the natural map
\[
\R\Hom_{X}(\CN, \CK(y)) \overset{\alpha}{\lto}
\R\Hom_{X}(\CN \otimes^{\LL}_{X} \CK(y),
 \CK(y) \otimes^{\LL}_{X} \CK(y)),
\]
and let us consider $\beta$ the map
\[
\R\Hom_{X}(\CN \otimes^{\LL}_{X} \CK(y),\CK(y) \otimes^{\LL}_{X} \CK(y)) 
\overset{\beta}{\lto}
\R\Hom_{X}(\CN, \CK(y)),
\]
determined by the canonical morphisms 
\[
\CN\cong\CN\otimes^{\LL}_{X}\CO_X \lto \CN\otimes^{\LL}_{X}\CK(y),\quad 
\CK(y) \otimes^{\LL}_{X} \CK(y) \lto \CK(y).
\]
It is clear that $\beta \circ \alpha = \id$. Taking $\CN= \CK(x)$, if $x\neq y$ then $\CK(x) \otimes^{\LL}_{X} \CK(y) = 0$, therefore $\beta \circ \alpha$ is the zero map, so 
$\R\Hom_{X}(\CK(x), \CK(y)) = 0$.
\end{proof}

\begin{proposition}
\label{ZandLocZ} For any subset $Z\subset X$:
\[
\begin{aligned}
           Z= & \{\, x\in X \,\,|\,\, \CK(x) \in \Loc (Z)\,\}, \\
X\setminus Z= & \{\, x\in X \,\,|\,\, \CK(x) \in {\Loc (Z)}^\perp\,\}.
\end{aligned}
\]
\end{proposition}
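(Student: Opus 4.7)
The plan is to reduce both equalities to one technical observation: the right orthogonal of $\Loc(Z)$ coincides with $\SS_Z^\perp$, where $\SS_Z=\{\CK(y)\mid y\in Z\}$ is the generating set. Indeed, for any fixed $\CF\in\D_\qc(X)$, the full subcategory $\{\CG\in\D_\qc(X)\mid \R\Hom_X(\CG,\CF)=0\}$ is triangulated and stable under coproducts (because $\R\Hom_X(-,\CF)$ converts coproducts in the first variable into products in $\D(\Ab)$), so it is localizing. If it contains $\SS_Z$, it therefore contains all of $\Loc(Z)$. In particular $\Loc(Z)^\perp=\SS_Z^\perp$.

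With this observation in hand, the inclusions $Z\subset\{x\mid \CK(x)\in\Loc(Z)\}$ and $X\setminus Z\subset\{x\mid \CK(x)\in\Loc(Z)^\perp\}$ are immediate: the first is the tautology coming from the definition of $\Loc(Z)$ as generated by $\SS_Z$; the second uses Lemma~\ref{Homvanishes}, which gives $\R\Hom_X(\CK(y),\CK(x))=0$ whenever $x\ne y$, so that if $x\notin Z$ then $\CK(x)\in\SS_Z^\perp=\Loc(Z)^\perp$.

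For the reverse inclusions, I would use that $\R\Hom_X(\CK(x),\CK(x))\ne 0$ (it contains the identity in degree zero), so $\CK(x)$ cannot simultaneously belong to $\Loc(Z)$ and $\Loc(Z)^\perp$. Consequently, if $\CK(x)\in\Loc(Z)$ but $x\notin Z$, the previous paragraph would place $\CK(x)$ also in $\Loc(Z)^\perp$, a contradiction, so $x\in Z$. Symmetrically, if $\CK(x)\in\Loc(Z)^\perp$ but $x\in Z$, then $\CK(x)$ lies in the intersection, giving the same contradiction; hence $x\notin Z$. There is no serious obstacle: the entire statement follows by combining Lemma~\ref{Homvanishes} with the routine fact that a localizing subcategory is determined, up to right orthogonality, by any generating set.
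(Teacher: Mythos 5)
Your proof is correct and follows essentially the same route as the paper's: Lemma~\ref{Homvanishes} together with the observation that orthogonality against a fixed object is a localizing condition (so $\Loc(Z)^\perp=\SS_Z^\perp$), plus the nonvanishing of $\R\Hom_X(\CK(x),\CK(x))$ to rule out $\CK(x)$ lying in both $\Loc(Z)$ and $\Loc(Z)^\perp$. The only difference is presentational: the paper reduces the first identity to the second and leaves the disjointness step implicit, whereas you spell both out.
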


\begin{proof}
It is enough to check that $\CK(x) \in {\Loc (Z)}^\perp$, for all $x\in X\setminus Z$.
By Lemma~\ref{Homvanishes},
the fact that $x\in X\setminus Z$ is equivalent to $\R\Hom_{X}(\CK(z), \CK(x))= 0$ for all $z\in Z$, that is, $\R\Hom_{X}(\CM, \CK(x))= 0$ for all $\CM\in \Loc(Z)$, so $\CK(x) \in {\Loc (Z)}^\perp$.
 \qedhere
\end{proof}

\begin{proposition}
\label{YandColY}
For any subset $Y\subset X$:
\[
\begin{aligned}
           Y= & \{\, x\in X \,\,|\,\, \CK(x) \in \Col (Y)\,\}, \\
X\setminus Y= & \{\, x\in X \,\,|\,\, \CK(x) \in {}^\perp{\Col (Y)}\,\}.
\end{aligned}
\]
\end{proposition}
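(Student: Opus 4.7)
The plan is to dualize the argument in Proposition~\ref{ZandLocZ}. As in that proof, it suffices to establish the second identity: once that is known, the inclusion $Y\subset\{x\,/\,\CK(x)\in\Col(Y)\}$ is immediate from $\CK(x)\in\SS_Y\subset\Col(Y)$ when $x\in Y$, while the reverse inclusion is forced because $\CK(x)\in\Col(Y)\cap{}^\perp\Col(Y)$ would give $\R\Hom_X(\CK(x),\CK(x))=0$, contradicting Lemma~\ref{Homvanishes}.

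The key step for the second identity is to exhibit the class
\[
\CN_x:=\{\,\CG\in\D_\qc(X)\,/\,\R\Hom_X(\CK(x),\CG)=0\,\}
\]
as a \emph{colocalizing} subcategory of $\D_\qc(X)$. Stability under triangles and shifts is clear since $\R\Hom_X(\CK(x),-)\colon\D_\qc(X)\to\D(\Ab)$ is a triangulated functor. Stability under products amounts to showing that $\R\Hom_X(\CK(x),-)$ preserves products: this is standard, and can be seen either from the fact that $\Hom_{\D(X)}(\CK(x),-[n])$ is representable (hence product-preserving) for each $n$, or via the identification $\R\Hom_X(\CK(x),\CG)\cong\R\Gamma(X,\HOMqc_X(\CK(x),\CG))$ together with the displayed lemma above (which gives that $\HOMqc_X(\CK(x),-)$ preserves products) and the fact that $\R\Gamma(X,-)\cong\R\Hom_X(\CO_X,-)$ does so as well.

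With $\CN_x$ known to be colocalizing, the remainder is a direct use of Lemma~\ref{Homvanishes}. If $x\notin Y$, then $x\neq y$ for every $y\in Y$, so Lemma~\ref{Homvanishes} gives $\CK(y)\in\CN_x$; thus $\SS_Y\subset\CN_x$, and by minimality $\Col(Y)=\Colset{Y}\subset\CN_x$, which is precisely $\CK(x)\in{}^\perp\Col(Y)$. Conversely, if $\CK(x)\in{}^\perp\Col(Y)$ then $\R\Hom_X(\CK(x),\CK(y))=0$ for every $y\in Y$, and Lemma~\ref{Homvanishes} forces $x\neq y$ for all $y\in Y$, i.e.\ $x\notin Y$.

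The only non-formal ingredient is the preservation of products by $\R\Hom_X(\CK(x),-)$; once that is in hand, the proof parallels Proposition~\ref{ZandLocZ} verbatim after swapping localizing for colocalizing and coproducts for products.
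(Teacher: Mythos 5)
Your proposal is correct and follows essentially the same route as the paper: reduce to the second identity, apply Lemma~\ref{Homvanishes} to the generators $\CK(y)$, and pass from the generators to all of $\Col(Y)$ because the right orthogonal of $\CK(x)$ is a colocalizing subcategory (product preservation of $\HOMqc_X(\CK(x),-)$, as in the lemma of \S\ref{ayuda1}). The only difference is that you spell out explicitly the steps the paper leaves implicit, namely the verification that $\CN_x$ is colocalizing (already asserted in \ref{ortcoloc}) and the deduction of the first identity from the second.
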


\begin{proof}
Again, let us check the second identity. 
By Lemma~\ref{Homvanishes},
$x\in X\setminus Y$  is equivalent to $\R\Hom_{X}(\CK(x), \CK(y))= 0$ for all $y\in Y$, that is, $\R\Hom_{X}(\CK(x), \CM)= 0$ for all $\CM\in \Col(Y)$, that is $\CK(x) \in {}^\perp{\Col (Y)}$.
\end{proof}

The next result (not used later) makes explicit the condition of orthogonals in terms of inverse image functors associated to the point.

\begin{proposition}
 Let $x \in X$. The following hold:
 
\begin{enumerate}[1.]
 \item $\{\,\CK(x)\,\}^\perp = \{\,\CF \in \D_{\qc}(X) \,\,|\,\, j_x^\times \CF = 0\,\}$.
 \item ${^\perp\{\,\CK(x)\,\}} = \{\,\CF \in \D_{\qc}(X) \,\,|\,\, \LL j_x^* \CF = 0\,\}$.
\end{enumerate}
\end{proposition}

\begin{proof}
Recall from (\ref{inverse-direct}) that the canonical morphism $j_x\colon \Spec(k(x))\to X$ we have the following adjunctions:
 
 \[
\begin{tikzpicture}
      \node (G) {$\D(k(x))$};  
      \node[node distance=3cm, right of = G] (H) {$\D_{\qc}(X)$};
      \draw[<-, bend left=30] (G) to node [above] {{\footnotesize $\LL j_x^*$}} (H);
      \draw[<-, bend right=30] (G) to node [above] {{\footnotesize $j_x^\times$}} (H);
      \draw[->] (G) to node [above] {{\footnotesize ${j_x}_*$}} (H);
\end{tikzpicture}
\]
where we identify the derived category of $k(x)$-vector spaces,  $\D(k(x))$, with the category $\D_\qc(\Spec(k(x)))$.
 
 Let $\CF \in \D_{\qc}(X)$, we have that 
\[
\R\Hom_{X}(\CK(x), \CF) = \R\Hom_{\{x\}}(\widetilde{k(x)}, j_x^\times \CF) 
\] 
and this implies that $\CF$ is right orthogonal to $\CK(x)$ if and only if the complex of $k(x)$-vector spaces $j_x^\times \CF$ is zero. Analogously, by adjunction,
\[
\R\Hom_{X}(\CF, \CK(x)) = \R\Hom_{\{x\}}(\LL j_x^* \CF, \widetilde{k(x)}) 
\]
but a complex of $k(x)$-vector spaces whose dual is 0 is necessarily 0, therefore we conclude that $\CF$ is left orthogonal to $\CK(x)$ if and only if $\LL j_x^* \CF = 0$.
\end{proof}


\section{Point generated schemes.}\label{sec3}

\begin{cosa}\label{vanishing}
Given $\CF, \CG \in \D_\qc(X)$ (or, in general $\CF, \CG \in \D(X)$), the condition \[\rhom_X(\CG, \CF) = 0\] corresponds to the vanishing of the homologies \ie 
\[
\h^n(\rhom_X(\CG, \CF)) = \Hom_{\D(X)}(\CG, \CF[n]) = 0, \quad \text{ for all } n \in \ZZ.
\]
\end{cosa}

\begin{definition}
We say the scheme $X$ is \emph{point generated} if the set $\{\,\CK(x) \,\,|\,\, x\in X\}$ is a family of generators of $\D_\qc(X)$ as a triangulated category in the sense of \cite[Definition 1.7]{Ngd}, that is, given $\CF \in \D_\qc(X)$, $\rhom_X(\CK(x), \CF) = 0$ for all $ x\in X$ implies $\CF = 0$.
\end{definition}

\begin{lemma}
A scheme $X$ is point generated precisely when $\Locset{X} = \D_\qc(X)$.
\end{lemma}

\begin{proof}
By \cite[Theorem 5.7]{AJS1}, given $\SS$  a set of objects in $\D(X)$ and $\SL$ the smallest localizing subcategory of $\D(X)$ containing $\SS$, the inclusion functor 
$\SL \inc \D(X)$ has a right adjoint $\gamma$. If further $\SS \subset \D_\qc(X)$, then $\gamma \circ \iota$ is the right adjoint to the inclusion $\SL \inc \D_\qc(X)$ where $\iota \colon  \D_\qc(X)\inc \D(X)$ denotes the canonical inclusion functor. As a consequence, we are in the situation of \cite[Proposition 1.6]{AJS1} so $\Locset{X}^\perp = 0$ is equivalent to $\Locset{X} = \D_\qc(X)$.
 \end{proof}

\begin{cosa} By the results in \cite{Nct}, it follows that any noetherian affine scheme is point generated.
In \cite{Oddball}, Neeman built a non noetherian ring $R$ of dimension zero  such that $X=\Spec(R)$ is \emph{not point generated}. In \lc Neeman constructs a very large set of distinct localizing subcategories (Bousfield classes) in $\D_\qc(X)\cong\D(R)$, being $X=\Spec(R)$ a one-point scheme, see section \ref{sec6}. 
\end{cosa}

We will prove next in Theorem~\ref{LocK(x)generan} that any noetherian scheme is point generated.

 \begin{lemma}\label{critpunt}
Let $X$ be a noetherian scheme and let $x\in X$ be a closed point.  For any $\CG\in \D_\qc(X)$, $\R \varGamma_{\{x\}}\CG\in \Locset{\{x\}}$.
 \end{lemma}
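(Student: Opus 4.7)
My approach is to reduce to an affine module statement and then to combine the Koszul description of $\R \varGamma_\mathfrak{m}$ with a Postnikov-style assembly from the cohomology. First, I would verify that $\Locset{\{x\}}$ is a $\otimes$-ideal of $\D_\qc(X)$: for any $\CG \in \D_\qc(X)$, the projection formula for the point inclusion $j_x\colon \Spec(k(x)) \to X$ yields
\[
\CK(x) \otimes^{\LL}_X \CG \;\cong\; (j_x)_*\bigl(j_x^*\CG\bigr),
\]
and since $j_x^*\CG$ is (non-canonically) a coproduct of shifts of $k(x)$ in $\D(k(x))$, the right side is a coproduct of shifts of $\CK(x)$ and hence lies in $\Locset{\{x\}}$. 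The subcategory of $\CF \in \D_\qc(X)$ such that $\CF \otimes^{\LL}_X \CG \in \Locset{\{x\}}$ for every $\CG$ is localizing and contains $\CK(x)$, so it equals $\Locset{\{x\}}$. Combined with the standard isomorphism $\R \varGamma_{\{x\}}\CG \cong \R \varGamma_{\{x\}}\CO_X \otimes^{\LL}_X \CG$ (valid since $\{x\}$ is locally cut out by a finitely generated ideal of $\CO_X$), the claim reduces to $\R \varGamma_{\{x\}}\CO_X \in \Locset{\{x\}}$.

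Next I would pass to an affine open $U = \Spec(A)$ containing $x$, with $x$ corresponding to a maximal ideal $\mathfrak{m} \subset A$. Since $\R \varGamma_{\{x\}}\CO_X$ is supported on $\{x\} \subset U$, it is the pushforward along the open immersion $j\colon U \hookrightarrow X$ of the sheafification of the $A$-complex $\R \varGamma_\mathfrak{m}(A)$. Now $j_*$ is exact on quasi-coherent sheaves, preserves coproducts on $\D_\qc$, and sends $A/\mathfrak{m}$ to $\CK(x)$, so it carries the localizing subcategory of $\D(A)$ generated by $A/\mathfrak{m}$ into $\Locset{\{x\}}$. This reduces the lemma to showing $\R \varGamma_\mathfrak{m}(A) \in \Loc(A/\mathfrak{m})$ inside $\D(A)$.

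Finally, choose generators $\mathfrak{m} = (a_1,\dots,a_r)$. The stable Koszul complex $K^\bullet_\infty(a_1,\dots,a_r) \simeq \R \varGamma_\mathfrak{m}(A)$ is concentrated in degrees $[0, r]$, so $\R \varGamma_\mathfrak{m}(A)$ has bounded cohomology. Each $H^i_\mathfrak{m}(A)$ is $\mathfrak{m}$-power torsion, and by the noetherian hypothesis every such $A$-module is the filtered colimit of its finitely generated submodules, which have finite length and are therefore iterated extensions of $k = A/\mathfrak{m}$; hence $H^i_\mathfrak{m}(A) \in \Loc(k)$. Since $\R \varGamma_\mathfrak{m}(A)$ is cohomologically bounded, the truncation triangles $\tau^{<n} \to \tau^{\le n} \to H^n[-n] \to$ assemble it from objects already in $\Loc(k)$, yielding the result. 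The only step with real content is the module-level claim that $\mathfrak{m}$-torsion modules lie in $\Loc(k)$; this is where the noetherian hypothesis is genuinely used, and the rest is formal manipulation of exact functors and triangles.
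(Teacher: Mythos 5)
Your argument is correct in substance, but it follows a genuinely different route from the paper's. The paper never leaves $X$ and never invokes the identity $\R\varGamma_{\{x\}}\CG\cong\R\varGamma_{\{x\}}\CO_X\otimes^{\LL}_X\CG$: writing ${j_x}_*k(x)=\CO_X/\CQ$, it expresses $\R\varGamma_{\{x\}}\CG$ as the countable B\"okstedt--Neeman homotopy colimit of the complexes $\HOMqc_X(\CO_X/\CQ^n,\CG)$ and shows by induction on $n$, via the triangles with third vertex $\HOMqc_X(\CQ^n/\CQ^{n+1},\CG)$, that each term lies in $\Locset{\{x\}}$, because $\CQ^n/\CQ^{n+1}$ is an $\CO_X/\CQ$-module and these Hom complexes therefore lie in the essential image of $\R{j_x}_*$. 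You instead re-prove the relevant special case of Lemma \ref{ayudatensor}/Proposition \ref{LZtensorideal} (that $\Locset{\{x\}}$ is a $\otimes$-ideal), reduce to $\R\varGamma_{\{x\}}\CO_X$ via the external (but standard, see \cite{AJL}) isomorphism $\R\varGamma_{\{x\}}\CG\cong\R\varGamma_{\{x\}}\CO_X\otimes^{\LL}_X\CG$, pass to an affine chart by excision, and finish with the classical torsion-module argument, essentially Neeman's affine case from \cite{Nct}. The paper's proof buys self-containedness (only the adjunction $\R{j_x}_*\dashv j_x^\times$ and countable homotopy colimits are used) and handles arbitrary $\CG$ directly; yours buys a reduction to a transparent module-level statement, at the price of importing two nontrivial standard facts.

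Two steps need patching. First, for the open immersion $j\colon U\hookrightarrow X$ the functor $j_*$ is \emph{not} exact on quasi-coherent sheaves in general (it is when $j$ is an affine morphism, e.g.\ when $X$ is semi-separated); nothing is lost, though: use $\R j_*$, which is triangulated, preserves coproducts because $j$ is concentrated, and sends the skyscraper $\widetilde{A/\mathfrak{m}}$ (a flasque sheaf) to $\CK(x)$, so that $(\R j_*)^{-1}\bigl(\Locset{\{x\}}\bigr)$ is a localizing subcategory of $\D(A)$ containing $A/\mathfrak{m}$ and hence all of $\Loc(A/\mathfrak{m})$; you should also say explicitly that $\R\varGamma_{\{x\}}\CO_X\cong\R j_*$ applied to the sheafification of $\R\varGamma_{\mathfrak{m}}(A)$ is derived excision, using that the cohomology is supported in $\{x\}\subset U$. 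Second, the claim that an $\mathfrak{m}$-power torsion module lies in $\Loc(k)$ because it is the filtered colimit of its finite-length submodules tacitly uses that localizing subcategories are closed under filtered colimits of modules; this is true but needs justification or a reference (e.g.\ \cite{AJS1}, which the paper itself invokes for exactly this kind of step). It can also be avoided: $M=\bigcup_{n\geq 1}(0:_M\mathfrak{m}^n)$ is a countable increasing union, each $(0:_M\mathfrak{m}^n)$ is an $A/\mathfrak{m}^n$-module and so has a finite filtration whose quotients are $k$-vector spaces, hence lies in $\Loc(k)$, and then $M$ is a countable homotopy colimit of objects of $\Loc(k)$. With these repairs your proof is complete.
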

 
 \begin{proof}
 (\emph{Cf.}  \cite[\S2]{Nct} for the affine case.) For a closed point $x\in X$, the morphism $j_x\colon \Spec(k(x))\to X$  is a closed embedding. 
  Let $\CQ\subset \CO_X$ be the quasicoherent sheaf of ideals such that ${j_x}_*k(x)=\CO_X/\CQ$. The scheme $X$ is noetherian, so the functor \emph{sections with support} in $\{x\}$, $\varGamma_{\!\{x\}}\colon \SA_\qc(X)\to \SA_\qc(X)$, is computed as
  \[
  \varGamma_{\!\{x\}}(-) =\dirlim{n > 0}\shom_{X}(\CO_X/\CQ^n, -).
  \]
  Therefore for each $\CG\in \D_\qc(X)$
  \[
  \R\varGamma_{\!\{x\}}\CG= \hocolim{n > 0} \HOMqc_{X}(\CO_X/\CQ^n, \CG).
  \]
where we are using B\"okstedt-Neeman notion of countable homotopy colimits \cite[Definition 2.1]{BN}.
Note that $\R {j_x}_*j_x^\times \CG=\HOMqc_{X}(\CO/\CQ, \CG)$ therefore it follows that $\HOMqc_{X}(\CO/\CQ, \CG)\in \Locset{\{x\}}$, also $\HOMqc_{X}(\CQ^n / \CQ^{n+1}, \CG)$ belongs to $ \Locset{\{x\}}$ because the natural map
\[
\R {j_x}_*j_x^\times \HOMqc_{X}(\CQ^n / \CQ^{n+1}, \CG)\cong \HOMqc_{X}(\CQ^n / \CQ^{n+1}, \CG)
\]
is an isomorphism
for any $n\geq 1$. Starting with $n=1$, the existence of the distinguished triangles
\[\small
\HOMqc_{X}(\CO_X/\CQ^{n}, \CG) \lto
\HOMqc_{X}(\CO_X/\CQ^{n+1}, \CG) \lto 
\HOMqc_{X}(\CQ^n / \CQ^{n+1}, \CG) \overset{+}{\lto},
\]%
yields by induction 
that 
$
\HOMqc_{X}(\CO_X/\CQ^{n+1}, \CG)\in \Locset{\{x\}},
$
for all $n\geq 1$.
As a consequence  $ \R\varGamma_{\!\{x\}}\CG$ belongs to  $\Locset{\{x\}}$.
\qedhere
\end{proof}

\begin{theorem}
\label{LocK(x)generan}
A noetherian scheme $X$  is point generated, \emph{i.e.} $\Locset{X}=\,\D_\qc(X)$.\end{theorem}

\begin{proof} Let  $\CF\in \D_\qc(X)$ be any complex and $\CC$ be the family of
stable for specialization subsets $Z \subset X$  such that 
$\R{}\varGamma_Z \CF \in \Locset{X}$. 
If $\{W_\alpha\}_{\alpha \in I}$ is any chain of elements of
$\CC$, and $\CF \to \CJ$ is a K-injective resolution of $\CF$, by \cite[Theorem 2.2 and
Theorem 3.1]{AJS1},
\[
\R{}\varGamma_{\cup W_\alpha} \CF = \varGamma_{\cup W_\alpha} \CJ =
\dirlim{\alpha \in I} \varGamma_{W_\alpha} \CJ ,
\]
belongs to $\Locset{X}$, because  $\R{}\varGamma_{W_\alpha} \CF = \varGamma_{W_\alpha} \CJ \in
\Locset{X}$, for all $\alpha$. Therefore  $\cup W_\alpha \in \CC$.

So there exists a maximal element $W\in \CC$.
Suppose $X \setminus W \neq \emptyset$. As $X$ is
noetherian the family of closed subsets
\[
\CT = \Big\{\, \,\overline{\{z\}} \,\,\,\,|\,\,\,\, z\in X \text{ and } 
\overline{\{z\}} \cap (X \setminus W) \neq \emptyset \,\, \Big\}
\]
has a minimal element $\overline{\{y\}}\in \CT$. If $x \in \overline{\{y\}} \cap
(X \setminus W)$, then $\overline{\{x\}} \in \CT$, but
$\overline{\{y\}}$ is minimal, so $x = y $ and therefore $W \cup \overline{\{y\}} = W
\cup \{y\}$. Let  
$i_y : \Spec(\CO_{X,y}) \to X$ be  the canonical map and let us denote by $X_y\subset X$  the set of points in $\Spec(\CO_{X,y})$. The image through $\R{}\varGamma_{W \cup \{y\}}$ of the  distinguished triangle
\[
\R{}\varGamma_{X \setminus X_y} \CF \lto \CF 
\lto \R{}i_{y\,*}i_y^*\CF \overset{+}{\lto}
\]
 is the triangle
\[ 
\R{}\varGamma_W \CF \lto \R{}\varGamma_{W \cup \{y\}} \CF 
\lto \R{}\varGamma_{\overline{\{y\}}}\R{}i_{y\,*}i_y^*\CF \overset{+}{\lto},
\]
where the third point $\R{}\varGamma_{\overline{\{y\}}}\R{}i_{y\,*}i_y^*\CF\cong \R{}i_{y\,*}\R{}\varGamma_{{\{y\}}}i_y^*\CF \in
\Locset{\{y\}} \subset \Locset{X}$ by Proposition~\ref{critpunt}. As a consequence  $W \cup \{y\}\in \CC$, contradicting the maximality of $W$ in $\CC$.
Necessarily, $W = X$, therefore  $\CF \cong \R{}\varGamma_X \CF \in \Locset{X}$. \qedhere
\end{proof}

\begin{remark*}
This theorem is the usual scheme case of \cite[Proposition 4.4.]{AJS3}. The proof presented here is more direct and helps to make the paper self-contained. 
\end{remark*}

\begin{cosa} \label{absflat} \textbf{Non-noetherian point generated schemes}.
There are examples of non-noetherian point generated schemes, even affine schemes. These examples are due to Stevenson \cite{St14}. We will discuss them
in the last section. 
\end{cosa}

\section{Localizing subcategories determined by points.}\label{sec4}

Throughout this section we will assume that $X$ is a concentrated scheme.

\begin{lema}\label{ayudatensor}
Given a point $x\in X$,  for each $\CM\in\D_{\qc}(X)$,  the complex  $\CK(x)\otimes^{\LL}_{X}\CM$ (equivalentely  $\CM\otimes^{\LL}_{X}\CK(x)$)  belongs to both $\Colset{\{x\}}$ and $\Locset{\{x\}}$.
\end{lema}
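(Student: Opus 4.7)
The plan is to reduce the claim to the triviality of localizing and colocalizing subcategories of $\D(k(x))$, already noted in the preliminary discussion of $\Spec(k)$. Let $j_x\colon \Spec(k(x))\to X$ be the canonical map, so that $\CK(x)=\R{j_x}_*k(x)$. Since $X$ is concentrated and $\Spec(k(x))$ is affine, $j_x$ is a concentrated morphism; the projection isomorphism recalled in \ref{inverse-direct}, applied with $\CA=\CM\in\D_\qc(X)$ and $\CF=k(x)$, yields
\[
\CK(x)\otimes^{\LL}_{X}\CM \;\cong\; \R{j_x}_*\bigl(\LL j_x^*\CM \otimes^{\LL}_{k(x)} k(x)\bigr) \;\cong\; \R{j_x}_*\LL j_x^*\CM.
\]
So it suffices to show that $\R{j_x}_*\CN$ lies in $\Locset{\{x\}}\cap\Colset{\{x\}}$ for every $\CN\in\D(k(x))$.

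Next, I would consider the full subcategories of $\D(k(x))$
\[
\SL := \{\,\CN\in\D(k(x)) \,/\, \R{j_x}_*\CN\in\Locset{\{x\}}\,\}, \quad \SC := \{\,\CN\in\D(k(x)) \,/\, \R{j_x}_*\CN\in\Colset{\{x\}}\,\}.
\]
The functor $\R{j_x}_*\colon \D_\qc(\Spec(k(x)))\to\D_\qc(X)$ is triangulated and preserves coproducts by \ref{inverse-direct}, and, being the right adjoint of $\LL j_x^*$ between these $\D_\qc$-categories, it preserves products in $\D_\qc$ as well. Hence $\SL$ is a localizing and $\SC$ a colocalizing subcategory of $\D(k(x))$, and both contain $k(x)$ since $\R{j_x}_*k(x)=\CK(x)$. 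By the preliminary observation that the only localizing and colocalizing subcategories of $\D(k(x))$ are $\{0\}$ and $\D(k(x))$ itself, we conclude $\SL=\SC=\D(k(x))$, and specialising to $\CN=\LL j_x^*\CM$ finishes the argument.

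The only delicate point is that products in $\D_\qc(X)$ differ in general from products in the ambient $\D(X)$, being computed via the coherator (see \ref{prodqc}); one must therefore invoke the adjunction $\LL j_x^*\dashv\R{j_x}_*$ at the level of $\D_\qc$ in order to see that $\R{j_x}_*$ respects the products that define $\Colset{\{x\}}$. Once this subtlety is acknowledged, the argument is formal and goes through uniformly for both the localizing and the colocalizing half of the statement.
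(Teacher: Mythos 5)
Your argument is correct and is essentially the paper's own proof: the same projection-formula reduction to $\R{j_x}_*\LL j_x^*\CM$, followed by the observation that $\R{j_x}_*$ preserves coproducts and ($\D_\qc$-)products, so that the image of any complex of $k(x)$-vector spaces lands in $\Locset{\{x\}}$ and $\Colset{\{x\}}$; your explicit use of the triviality of (co)localizing subcategories of $\D(k(x))$ just spells out what the paper leaves implicit. Your remark on the coherator-corrected products and the adjunction $\LL j_x^*\dashv\R{j_x}_*$ at the $\D_\qc$ level is a valid justification of the product-preservation the paper attributes to $j_x$ being concentrated.
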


\begin{proof}
Let   $j_x\colon \Spec(k(x))\to X$  be the canonical map. By the projection formula
\[
    \CK(x) \otimes^{\LL}_{X} \CM
        =     j_{x*} k(x) \otimes^{\LL}_X \CM 
        \cong j_{x*}\LL j^*_x \CM.
\]
The functor ${j_x}_*\colon \D(k(x))\to \D_\qc(X)$ preserves products and coproducts, being $j_x$ a concentrated morphism. So the complex $\CK(x) \otimes^{\LL}_{X} \CG$ belongs to $\Colset{\{x\}}$ and $\Locset{\{x\}}$, because it is the  image through ${j_x}_*$  of  a complex  of $k(x)$-vector spaces. 
\end{proof}

\begin{proposition}
\label{LZtensorideal}
Let $Z\subset X$ be any subset. If $\CN\in \Locset{Z}$, then $\CM \otimes^{\LL}_X \CN \in \Locset{Z}$, for all $\CM\in \D_{\qc}(X)$.
\end{proposition}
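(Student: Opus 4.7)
The plan is a standard d\'evissage: for a fixed $\CM \in \D_{\qc}(X)$, consider the full subcategory
\[
\CL_\CM := \{\,\CN \in \D_{\qc}(X) \,/\, \CM \otimes^{\LL}_X \CN \in \Locset{Z}\,\}
\]
and show that $\CL_\CM$ contains $\Locset{Z}$. The argument reduces the problem to checking the property on the generators $\CK(x)$ with $x \in Z$, where Lemma~\ref{ayudatensor} does the work directly.

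First I would verify that $\CL_\CM$ is itself a localizing subcategory of $\D_{\qc}(X)$. The functor $\CM \otimes^{\LL}_X (-) \colon \D_{\qc}(X) \to \D_{\qc}(X)$ is a $\Delta$-functor, so it sends distinguished triangles to distinguished triangles; therefore $\CL_\CM$ is a full triangulated subcategory. Moreover, being a left adjoint (with right adjoint $\HOMqc_X(\CM, -)$ by~\ref{ayuda1}), the functor $\CM \otimes^{\LL}_X (-)$ preserves coproducts in $\D_{\qc}(X)$, and since $\Locset{Z}$ is closed under coproducts, $\CL_\CM$ is closed under coproducts too.

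Next I would check that $\CL_\CM$ contains the generators of $\Locset{Z}$. For any $x \in Z$, Lemma~\ref{ayudatensor} gives that $\CM \otimes^{\LL}_X \CK(x) \in \Locset{\{x\}}$, and clearly $\Locset{\{x\}} \subset \Locset{Z}$. Hence $\CK(x) \in \CL_\CM$ for every $x \in Z$.

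Finally, since $\Locset{Z}$ is by definition the smallest localizing subcategory containing $\{\CK(x) \,/\, x \in Z\}$, we conclude $\Locset{Z} \subset \CL_\CM$, which is exactly the claim. There is no real obstacle here; the substance of the proposition has been packaged into Lemma~\ref{ayudatensor}, and the present proposition is a formal consequence of that lemma together with the fact that tensor product with a fixed object commutes with coproducts and triangles.
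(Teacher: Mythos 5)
Your proof is correct and follows essentially the same route as the paper: both consider the full subcategory of objects $\CN$ with $\CM\otimes^{\LL}_X\CN\in\Locset{Z}$, observe it is localizing because $\CM\otimes^{\LL}_X(-)$ is a coproduct-preserving $\Delta$-functor, and reduce to the generators $\CK(x)$, $x\in Z$, via Lemma~\ref{ayudatensor}. The only cosmetic difference is that the paper cuts its auxiliary subcategory out inside $\Locset{Z}$ rather than in all of $\D_{\qc}(X)$, which changes nothing in the argument.
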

\begin{proof} 
For each $\CM\in \D_{\qc}(X)$, the full subcategory $\SL_{\CM,Z} \subset \D_{\qc}(X)$ defined by
\[
\SL_{\CM,Z}:=\big\{\,\,\CN\in \Locset{Z} \,\,\,\,\big{|}\,\,\,\, \CM\otimes^{\LL}_{X}\CN\in \Locset{Z}\,\,\big\}
\]
is a localizing subcategory in  $\D_{\qc}(X)$ beacuse $\CM\otimes^{\LL}_{X}-\colon \D_\qc(X)\to  \D_\qc(X)$ is a $\Delta$-functor that preserves coproducts and $\Locset{Z}$ is localizing. For each $x\in Z$, by Lemma \ref{ayudatensor}, $\CM\otimes^{\LL}_{X}\CK(x)\in \Locset{\{x\}}\subset \Locset{Z}$. Thus $\SL_{\CM,Z}$ is a localizing subcategory of $\Locset{Z}$ that contains all the complexes $\CK(x)$ with $x\in Z$, so we conclude that $\SL_{\CM,Z}=\Locset{Z}$.
\end{proof}

\begin{cosa} \textbf{Tensor ideals}.
We say that a localizing subcategory $\SL\subset \D_{\qc}(X)$ is $\otimes$-\emph{ideal} if $\CM\otimes^{\LL}_{X} \CN\in \SL$, for all $\CN\in \SL$ and all $\CM\in \D_{\qc}(X)$.
\end{cosa}

On affine schemes, all localizing subcategories are $\otimes$-ideals:

\begin{proposition}\label{allideals}
If $X=\spec(R)$ is an affine scheme then \emph{any} localizing subcategory $\SL\subset \D_{\qc}(X)$ is $\otimes$-{ideal}
\end{proposition}

\begin{proof} For simplicity, let us identify $\D_{\qc}(X)$ and $\D(R)$.
Note that 
\[
\SL':=\big\{\,\,M\in \D(R) \,\,\,\,\big{|}\,\,\,\, M\otimes^{\LL}_{R}N\in \SL,\,\,\forall N\in \SL \,\,\big\}
\]
is a localizing subcategory of $\D(R)$ that contains $R$, then $\SL'=\D(R)$ and, as a consequence, $\SL$ is $\otimes$-ideal.
\end{proof}

\begin{remark*}
 On a general noetherian scheme $X$ there may exist localizing subcategories that are not $\otimes$-ideals. See \cite[Example, p.~595]{AJS3} for an example with $X = \mathbf{P}^1_k$, $k$ a field. Notice that in \lc $\otimes$-ideals are denominated \emph{rigid}.
 
We may consider also for a subset $\SS \subset  \D_{\qc}(X)$ the smallest localizing $\otimes$-ideal containing $\SS$ denoted $\Locsett{\SS}$. In general $\Locset{\SS} \subset \Locsett{\SS}$ and this inclusion may be strict. For instance, let $\SS = \{\CO_X\}$, in this case $\Locsett{\{\CO_X\}} =  \D_{\qc}(X)$ while $\Locset{\{\CO_X\}}$ may be strictly smaller as the previous example shows.

With this notation, for a localizing subcategory generated by residue fields of a subset $Z \subset X$, Proposition \ref{LZtensorideal} says that $\Locset{Z} = \Locsett{\{\CK(x) \,\,|\,\, x \in Z\}}$. So, we will not need (and will not use) the notation $\Locsett{\SS}$ for a general $\SS$ in the paper.
\end{remark*}

\begin{proposition}
\label{hacenfaltaHomideales}
Let $\SL\subset \D_{\qc}(X)$ be a $\otimes$-ideal localizing subcategory and $x\in X$  a point. Then  $\CK(x)\in \SL$ if, and only if,  there exists $\CN\in \SL$ such that $\CK(x)\otimes^{\LL}_{X}\CN\neq 0$.
\end{proposition}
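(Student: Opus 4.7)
The forward direction is immediate: if $\CK(x)\in \SL$, take $\CN=\CK(x)$; then $\CK(x)\otimes^{\LL}_{X}\CK(x)\neq 0$ since it has $\CK(x)$ as a direct summand (this is the opening observation in the proof of Lemma~\ref{Tensorvanishes}).

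For the converse, suppose $\CN\in \SL$ with $\CK(x)\otimes^{\LL}_{X}\CN\neq 0$. Since $\SL$ is a $\otimes$-ideal,
\[
\CK(x)\otimes^{\LL}_{X}\CN \in \SL.
\]
I would rewrite this complex using the projection formula, as done in the proof of Lemma~\ref{ayudatensor}:
\[
\CK(x)\otimes^{\LL}_{X}\CN \;\cong\; \R j_{x*}\LL j_x^* \CN,
\]
where $j_x\colon \Spec(k(x))\to X$ is the canonical map. So the task reduces to analyzing $\LL j_x^*\CN$, which is now a nonzero object of $\D(k(x))$.

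Every complex over the field $k(x)$ is quasi-isomorphic to (the sum of) its cohomology, so
\[
\LL j_x^*\CN \;\cong\; \bigoplus_{i\in \ZZ} k(x)^{(I_i)}[-i],
\]
with at least one index set $I_i$ nonempty. Because $j_x$ is concentrated, $\R j_{x*}$ preserves coproducts (as recalled in Lemma~\ref{ayudatensor}), so
\[
\CK(x)\otimes^{\LL}_{X}\CN \;\cong\; \bigoplus_{i\in \ZZ}\CK(x)^{(I_i)}[-i].
\]
In particular, some shift $\CK(x)[-i]$ is a direct summand of $\CK(x)\otimes^{\LL}_{X}\CN\in \SL$. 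Since $\SL$ is triangulated (hence stable under shifts) and stable under direct summands by Eilenberg's swindle (noted at the beginning of \S\ref{sec2}), we conclude $\CK(x)\in \SL$.

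There is no real obstacle here: the whole argument rests on combining the $\otimes$-ideal hypothesis with the projection formula and the trivial structure theory of complexes of vector spaces. The only point worth double-checking is that $\R j_{x*}$ preserves the (possibly infinite) coproduct appearing after splitting $\LL j_x^*\CN$, which is precisely the concentrated-morphism fact already invoked in Lemma~\ref{ayudatensor}.
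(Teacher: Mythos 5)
Your proposal is correct and follows essentially the same route as the paper's proof: trivial forward direction, then $\otimes$-ideality plus the projection formula to reduce to a nonzero complex of $k(x)$-vector spaces, from which a shifted copy of $\CK(x)$ is extracted as a direct summand inside $\SL$. The only cosmetic difference is that you split $\LL j_x^*\CN$ into a full coproduct and invoke preservation of coproducts by $\R {j_x}_*$, whereas the paper simply picks one nonzero cohomology degree, notes $k(x)[-i]$ is a direct summand there, and uses that (any additive functor preserving) direct summands suffices — so your extra coproduct step is harmless but not needed.
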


\begin{proof} For any $x\in X$, $\CK(x)\otimes^{\LL}_{X}\CK(x)\neq 0$. If $\CK(x)\in \SL$, the statement holds taking  $\CN=\CK(x)$.

Conversely, let $\CN\in \SL$ and $x\in X$ be such that $\CK(x)\otimes^{\LL}_{X}\CN\neq 0$. The localizing subcategory $\SL$ is a $\otimes$-ideal, therefore $\CK(x)\otimes^{\LL}_{X}\CN$ belongs to $\SL$. From the projection isomorphism
\[
\CK(x)\otimes^{\LL}_{X}\CN =  {j_x}_*k(x)\otimes^{\LL}_{X}\CN \cong {j_x}_*(k(x)\otimes^{\LL}_{k(x)} \LL j_x^*\CN) = {j_x}_* \LL j_x^*\CN,
\]
we conclude that $\CW := \LL j_x^*\CN$ is a non acyclic complex of $k(x)$-vector spaces. Let $i\in \NN$ such that $H^i\CW\neq 0$, then $k(x)[-i]$ is a direct summand of $\CW$. Therefore $\CK(x)[-i] = {j_x}_*k(x)[-i]$ is a direct summand of ${j_x}_*\CW\in \SL$, so $\CK(x)\in \SL$.
\end{proof}

\begin{corollary}\label{ayuda1.1}
Let $\SL \subset \D_{\qc}(X)$ be a $\otimes$-ideal localizing subcategory. Given a point $x\in X$, $\CK(x)\notin \SL$ if, and only if, $\CK(x)\otimes^{\LL}_{X}\CN= 0$ for all $\CN\in \SL$.
\end{corollary}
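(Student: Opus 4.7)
The plan is extremely short: the statement is simply the logical negation of both sides of the biconditional in Proposition~\ref{hacenfaltaHomideales}. First I would observe that Proposition~\ref{hacenfaltaHomideales} asserts, for a $\otimes$-ideal localizing subcategory $\SL$ and a point $x \in X$, the equivalence
\[
\CK(x) \in \SL \;\Longleftrightarrow\; \exists\, \CN \in \SL \text{ such that } \CK(x) \otimes^{\LL}_{X} \CN \neq 0.
\]
Negating both sides of this biconditional gives exactly
\[
\CK(x) \notin \SL \;\Longleftrightarrow\; \forall\, \CN \in \SL,\ \CK(x) \otimes^{\LL}_{X} \CN = 0,
\]
which is the content of the corollary. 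No further argument is required; the corollary is essentially a restatement of the previous proposition in contrapositive form. Since both directions of Proposition~\ref{hacenfaltaHomideales} have already been established (the forward direction using $\CK(x) \otimes^{\LL}_{X} \CK(x) \neq 0$, and the converse using the projection formula to split off $\CK(x)[-i]$ as a direct summand of the nonzero image of a complex of $k(x)$-vector spaces under $\R{j_x}_*$), there is no step here which constitutes an obstacle.
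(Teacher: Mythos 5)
Your proposal is correct and coincides with the paper's own proof, which likewise treats Corollary~\ref{ayuda1.1} as a mere restatement (negation of both sides) of Proposition~\ref{hacenfaltaHomideales}.
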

\begin{proof}
This is just a restatement of Proposition~\ref{hacenfaltaHomideales}.
\end{proof}

\begin{proposition}
\label{Ltensorideal}
Let us assume that $X$ is a point generated concentrated scheme.  If $\SL\subset \D_{\qc}(X)$ is $\otimes$-ideal localizing subcategory   then $\SL=\Locset{Z}$, with  $Z:=\{\,x\in X \,\,|\,\, \CK(x)\in \SL\, \}$.
\end{proposition}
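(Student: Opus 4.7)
The plan is to prove the two containments $\Locset{Z} \subset \SL$ and $\SL \subset \Locset{Z}$. The first is immediate: by definition of $Z$, every $\CK(x)$ with $x \in Z$ lies in $\SL$, and $\SL$ is localizing, so $\Locset{Z} \subset \SL$.

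For the reverse inclusion, fix $\CN \in \SL$. The key idea is to show that $\CN$ lies in the localizing subcategory generated by the family $\{\CN \otimes^{\LL}_X \CK(x) : x \in X\}$, and then to analyze each piece according to whether $x \in Z$ or $x \notin Z$. To prove this, define
\[
\CS := \Loc\big(\{\,\CN \otimes^{\LL}_X \CK(x) \,/\, x \in X\,\}\big) \subset \D_{\qc}(X),
\]
and consider
\[
\CT := \big\{\,\CF \in \D_{\qc}(X) \,/\, \CN \otimes^{\LL}_X \CF \in \CS\,\big\}.
\]
Since $\CN \otimes^{\LL}_X -$ is a $\Delta$-functor that preserves coproducts and $\CS$ is localizing, $\CT$ is itself a localizing subcategory of $\D_{\qc}(X)$. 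By construction $\CK(x) \in \CT$ for every $x \in X$, so $\Locset{X} \subset \CT$. Because $X$ is well supported, $\Locset{X} = \D_{\qc}(X)$, whence $\CO_X \in \CT$ and therefore $\CN \cong \CN \otimes^{\LL}_X \CO_X \in \CS$.

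It remains to check that each generator $\CN \otimes^{\LL}_X \CK(x)$ of $\CS$ lies in $\Locset{Z}$. Split into two cases. If $x \in Z$, then by Lemma~\ref{ayudatensor} the complex $\CN \otimes^{\LL}_X \CK(x)$ belongs to $\Locset{\{x\}} \subset \Locset{Z}$. If $x \notin Z$, then by definition $\CK(x) \notin \SL$, and since $\SL$ is a $\otimes$-ideal, Corollary~\ref{ayuda1.1} gives $\CK(x) \otimes^{\LL}_X \CN = 0 \in \Locset{Z}$. Hence every generator of $\CS$ lies in $\Locset{Z}$, so $\CS \subset \Locset{Z}$, and consequently $\CN \in \Locset{Z}$.

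The main conceptual step, which is where the well-supported hypothesis on $X$ is used crucially, is the passage from the generators $\CK(x)$ (for $x \in X$) to the unit $\CO_X$ via the auxiliary subcategory $\CT$; without this, there would be no reason for $\CN$ itself to lie in the localizing subcategory generated by its tensor products with the residue sheaves. Everything else is a straightforward bookkeeping argument using the previously established Lemma~\ref{ayudatensor} and Corollary~\ref{ayuda1.1}.
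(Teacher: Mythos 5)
Your proof is correct and follows essentially the same route as the paper: both arguments rest on the fact that a localizing subcategory containing every $\CK(x)$, $x\in X$, must be all of $\D_{\qc}(X)$ by well-supportedness (hence contains $\CO_X$, so $\CN\cong\CO_X\otimes^{\LL}_X\CN$ is captured), combined with the case split $x\in Z$ (Lemma~\ref{ayudatensor}) versus $x\notin Z$ (Corollary~\ref{ayuda1.1}). The only cosmetic difference is that the paper uses one test subcategory $\SD=\{\,\CF\,/\,\CF\otimes^{\LL}_X\CN\in\Locset{Z},\ \forall\,\CN\in\SL\,\}$ for all $\CN\in\SL$ simultaneously, whereas you fix $\CN$ and insert the intermediate category $\CS$, which you could omit by letting $\CT$ test membership in $\Locset{Z}$ directly.
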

\begin{proof} 
Let $\SD\subset \D_{\qc}(X)$ be the  localizing subcategory  defined by
\[
\SD:=\big\{\,\,\CF\in \D_{\qc}(X)\,\,\,\,\big{|}\,\,\,\,\CF\otimes^{\LL}_{X}\CN\in \Locset{Z}, \,\,\forall \CN\in \SL\,\,\big\}.
\]
This localizing category is a $\otimes$-ideal. To this end, given $\CM\in \D_{\qc}(X)$ and $\CF\in \SD$, let us show that $(\CM\otimes^{\LL}_{X}\CF)\otimes^{\LL}_{X}\CN$ belongs to $\Locset{Z}$, for all $\CN\in \SL$. Indeed, the complex $\CF\otimes^{\LL}_{X}\CN\in \Locset{Z}$, by hypothesis. Then $\CM\otimes^{\LL}_{X}(\CF\otimes^{\LL}_{X}\CN)$ belongs to $\Locset{Z}$ because it is a $\otimes$-ideal by Proposition~\ref{LZtensorideal}, that is, $(\CM\otimes^{\LL}_{X}\CF)\otimes^{\LL}_{X}\CN \in \Locset{Z}$.

The localizing category $\SD$ contains all  the objects $\CK(x)$, with $x\in X$. If $x\in Z$, $\CK(x)\otimes^{\LL}_{X}\CN\in \Locset{Z}$, for any $\CN\in \D_{\qc}(X)$ in particular for those $\CN\in \SL$, then $\CK(x)\in \SD$. Let $x \in X\setminus Z$, that is, such that $\CK(x)\notin \SL$, then by Corollary~\ref{ayuda1.1}, for all $\CN\in \SL$,   $\CK(x)\otimes^{\LL}_{X}\CN=0$ belongs to $\Locset{Z}$. So 
$\SD=\Locset{X}$.

The scheme $X$ is point generated, therefore  $\SD=\D_{\qc}(X)$. In particular $\CO_X\in \SD$, then for all $\CN\in \SL$, $\CN=\CO_X\otimes^{\LL}_X \CN\in \Locset{Z}$. As consequence $\SL=\Locset{Z}.$
\qedhere
\end{proof}

\begin{theorem}\label{ThclasB} Let $X$ be a point generated concentrated scheme. There is a bijection
\[
\left\{    
    \begin{gathered}    
        \text{ Localizing }\\
        \otimes\text{-ideals of\/ } \D_{\qc}(X)
    \end{gathered}
\right\}
\xtofrom[\Loc]{\SZ}
\left\{  
    \begin{gathered}    
        \text{ Subsets of }\\
        X
    \end{gathered}
\right\}
\]
where the map $\Loc$ takes a subset $Z \subset X$ to the localizing subcategory $\Locset{Z}$ generated by the set $\{\CK(x) \,\,|\,\, x \in Z\}$ and $\SZ$ takes a localizing $\otimes$-ideal $\SL$ to the set of points $\SZ(\SL) := Z_\SL=\{\,x\in X \,\,|\,\, \CK(x)\in \SL\,\}$.
\end{theorem}

\begin{proof}
By Proposition~\ref{ZandLocZ}, $Z=\big\{\,x\in X \,\,|\,\, \CK(x)\in \Locset{Z}\,  \big\}$, thus $Z = Z_{\Locset{Z}}$. Conversely, by Proposition~\ref{Ltensorideal}, $\SL=\Locset{Z_\SL}$ for any $\otimes$-ideal localizing subcategory $\SL\subset \D_\qc(X)$.
\qedhere
\end{proof}

\begin{corollary} \label{nonsei} Every $\otimes$-ideal  localizing subcategory $\SL$ of\/
$\D_\qc(X)$ has associated a localization functor.
\end{corollary}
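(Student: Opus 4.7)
The plan is to combine the classification result just proved (Theorem \ref{ThclasB}) with the existence criterion for right adjoints to inclusions of localizing subcategories, namely \cite[Theorem 5.7]{AJS1}, which has already been invoked several times in the paper. The existence of a right adjoint to the inclusion $\SL \hookrightarrow \D_\qc(X)$ is precisely what it means for $\SL$ to define an acyclization (hence, a localization) functor, so once such a right adjoint is produced we are done.

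First, I would apply Theorem \ref{ThclasB} to a given $\otimes$-ideal localizing subcategory $\SL$ to rewrite it as $\SL = \Locset{Z_\SL}$, with $Z_\SL := \SZ(\SL) = \{\,x\in X\,/\, \CK(x)\in \SL\,\}$. By definition, $\Locset{Z_\SL}$ is the smallest localizing subcategory of $\D_\qc(X)$ containing the family $\{\CK(x)\,/\, x \in Z_\SL\}$, a \emph{set} (not a proper class) indexed by a subset of the underlying space of $X$.

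Next, I would invoke the discussion in \ref{ortcoloc}: since the generating family $\SS_{Z_\SL}$ is a set, \cite[Theorem 5.7]{AJS1} applies and yields a right adjoint $\gamma_\SL\colon \D_\qc(X) \to \SL$ to the inclusion. Completing the counit of this adjunction $\gamma_\SL \to \id_{\D_\qc(X)}$ into a distinguished triangle $\gamma_\SL \to \id \to \lambda_\SL \xrightarrow{+}$ produces the associated localization functor $\lambda_\SL$, whose kernel is exactly $\SL$.

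The only subtle point is that \emph{a priori} a localizing subcategory need not be generated by a set, in which case no such right adjoint need exist; this is the main conceptual obstacle one must address. The role of Theorem \ref{ThclasB} here is precisely to bypass that obstacle: it tells us that in our setting of $\otimes$-ideals on a (concentrated, well supported) scheme, every such subcategory is automatically generated by the explicit set $\{\CK(x)\,/\, x \in Z_\SL\}$, so the set-theoretic hypothesis of \cite[Theorem 5.7]{AJS1} is built into the classification and the corollary is immediate.
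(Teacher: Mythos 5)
Your proposal is correct and follows essentially the same route as the paper: use Theorem \ref{ThclasB} to see that $\SL=\Locset{Z_\SL}$ is generated by the set $\{\CK(x)\,/\,x\in Z_\SL\}$, then apply \cite[Theorem 5.7]{AJS1} to obtain the associated localization functor. The extra remarks about the right adjoint, the counit triangle, and the set-generation issue only make explicit what the paper's two-line proof leaves implicit.
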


\begin{proof} Theorem \ref{ThclasB} says that a $\otimes$-ideal localizing
subcategory $\SL \subset \D_\qc(X)$ is the smallest localizing subcategory of $\D_\qc(X)$ that contains the set $\{\CK(x) \,\,|\,\, x \in Z_{\SL}\}$. By applying \cite[Theorem 5.7]{AJS1} to the localizing subcategory $\SL$ we conclude that it has an associated localization functor.
\end{proof} 

\section{Colocalizing subcategories determined by points.}\label{sec5}

Let $X$ be a concentrated scheme. For a colocalizing subcategory  $\SC\subset \D_{\qc}(X)$, let $Y_\SC\subset X$ be the set of points
\[
 Y_\SC  := \{\, x\in X\ |\ \CK(x) \in \SC \,\}
\]
We will characterize those colocalizing subcategories of $\D_{\qc}(X)$ determined by the assigned set of points.

\begin{lema}\label{ayuda12.2}
Given a point $x\in X$, each of the complexes  
$\HOMqc_X(\CF, \CK(x))$ and $\HOMqc_X(\CK(x),\CF)$ belongs to both $\Colset{\{x\}}$ and $\Locset{\{x\}}$, for any $\CF\in\D_{\qc}(X)$.
\end{lema}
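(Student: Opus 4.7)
The plan is to follow the same pattern as Lemma~\ref{ayudatensor}: reduce each of the two internal-hom complexes to $\R{j_x}_*$ applied to some object of $\D(k(x))$, and then combine the simple structure of $\D(k(x))$ with the good behaviour of $\R{j_x}_*$ for products and coproducts.

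For $\HOMqc_X(\CK(x),\CF) = \HOMqc_X(\R{j_x}_* k(x),\CF)$, the internal adjunction recalled at the end of \ref{inverse-direct} (applied with $f=j_x$) gives at once
\[
\HOMqc_X(\CK(x),\CF) \cong \R{j_x}_* \HOMqc_{\Spec(k(x))}(k(x), j_x^\times \CF) \cong \R{j_x}_* j_x^\times \CF.
\]
For $\HOMqc_X(\CF,\CK(x)) = \HOMqc_X(\CF, \R{j_x}_* k(x))$, I would use that $({-})\otimes^{\LL}_X \CF$ is left adjoint to $\HOMqc_X(\CF,{-})$ and that $\LL j_x^*$ is left adjoint to $\R{j_x}_*$ and is monoidal; a Yoneda computation then yields
\[
\HOMqc_X(\CF,\CK(x)) \cong \R{j_x}_* \R\shom_{\Spec(k(x))}(\LL j_x^* \CF, k(x)).
\]
On the affine scheme $\Spec(k(x))$ every $\CO$-module complex is automatically quasi-coherent, and $\R{j_x}_*$ preserves quasi-coherence as $j_x$ is concentrated; hence no coherator is needed on the right-hand side.

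Both complexes are thus of the form $\R{j_x}_*\CV$ for some $\CV\in\D(k(x))$. Any such $\CV$ is quasi-isomorphic to $\bigoplus_{i\in\ZZ} H^i(\CV)[-i]$, which coincides with $\prod_{i\in\ZZ} H^i(\CV)[-i]$, and each $k(x)$-vector space is a coproduct of copies of $k(x)$ and a direct summand of a product of copies of $k(x)$. Hence $\CV$ belongs to both the localizing and the colocalizing subcategory of $\D(k(x))$ generated by $k(x)$. Since $j_x$ is concentrated, $\R{j_x}_*$ preserves coproducts, and as a right adjoint it preserves products; together with $\R{j_x}_* k(x) = \CK(x)$ this yields the desired memberships in $\Locset{\{x\}} \cap \Colset{\{x\}}$.

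The step I expect to require most care is verifying that the coherator acts trivially in the second displayed isomorphism; once that is in place, the rest is essentially the same vector-space decomposition already exploited in Lemma~\ref{ayudatensor}.
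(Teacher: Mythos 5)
Your argument is correct and is essentially the paper's own proof: the two reductions you establish, $\HOMqc_X(\CF,\CK(x)) \cong \R{j_x}_*\R\shom_{\Spec(k(x))}(\LL j_x^{*}\CF, k(x))$ via the tensor--hom adjunction and $\HOMqc_X(\CK(x),\CF) \cong \R{j_x}_*\HOMqc_{\Spec(k(x))}(k(x), j_x^{\times}\CF)$ via the internal adjunction of \ref{inverse-direct}, are exactly the isomorphisms used there, followed by the same conclusion from the trivial structure of $\D(k(x))$ and the fact that $\R{j_x}_*$ preserves products and coproducts. Your extra check that the target of the first isomorphism already lies in $\D_{\qc}(X)$ (so no coherator intervenes) is a point the paper leaves implicit, but it does not change the route.
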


\begin{proof}
The  canonical map $j_x\colon \Spec(k(x))\lto X$ is a concentrated morphism.
Let us consider the  natural isomorphisms
\[
\begin{aligned}
\HOMqc_X(\CF, \CK(x)) &=  \HOMqc_X\big(\CF, {j_x}_*k(x)\big)\\
                   &\cong {j_x}_*\HOMqc_{\{x\}}\big(\LL j_x^{*}\CF, \widetilde{k(x)}\big),\\
\HOMqc_X(\CK(x), \CF) &=  \HOMqc_X\big({j_x}_*k(x), \CF \big)\\ 
                   &\cong {j_x}_*\HOMqc_{\{x\}}\big(\widetilde{k(x)}, j_x^\times \CF\big).
\end{aligned}
\]
The $\Delta$-functor ${j_x}_*\colon \D(k(x))\to \D_\qc(X)$ preserves products and coproducts. So both complexes $\HOMqc_X(\CF, \CK(x))$ and $\HOMqc_X(\CK(x), \CF)$ belong to $\Colset{\{x\}}$ and $\Locset{\{x\}}$, because they are in the essential image of ${j_x}_*$. 
\end{proof}

\begin{proposition}
Let $Y\subset X$ be a set of points. Then $\HOMqc_X(\CF,\CG)\in \Colset{Y}$ for any $\,\CG\in\Colset{Y}$ and $\CF\in \D_\qc(X)$.
\end{proposition}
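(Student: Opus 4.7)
The plan is to combine the two immediately preceding lemmas: Lemma~\ref{ayuda12.2} (which handles the case $\CG = \CK(x)$) and Lemma~\ref{ayuda12} (which reduces the general $\CG \in \Colset{Y}$ statement to the case of the generators $\CK(x)$ with $x \in Y$).

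First, I would fix an arbitrary $\CF \in \D_\qc(X)$ and aim to apply Lemma~\ref{ayuda12} to this $\CF$. Thus it suffices to verify condition \textit{(ii)} of that lemma: for every $x \in Y$, the complex $\HOMqc_X(\CF, \CK(x))$ lies in $\Colset{Y}$. For this, I would invoke Lemma~\ref{ayuda12.2}, which gives $\HOMqc_X(\CF, \CK(x)) \in \Colset{\{x\}}$ for every point $x \in X$. Since $\{x\} \subset Y$ whenever $x \in Y$, the inclusion $\Colset{\{x\}} \subset \Colset{Y}$ is immediate from the minimality characterization of $\Col(-)$, giving the required membership.

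Applying Lemma~\ref{ayuda12} then upgrades the pointwise statement to the desired one: $\HOMqc_X(\CF, \CG) \in \Colset{Y}$ for every $\CG \in \Colset{Y}$. Since $\CF \in \D_\qc(X)$ was arbitrary, the proposition follows. There is no real obstacle here; the substance is already packaged in Lemma~\ref{ayuda12.2} (which uses the projection/coherator-compatible adjunction formula $\HOMqc_X(\CF,\CK(x)) \cong \R{j_x}_*\HOMqc_{\Spec(k(x))}(\LL j_x^*\CF, k(x))$ together with the fact that $\R{j_x}_*$ preserves products) and Lemma~\ref{ayuda12} (a formal ``subcategory of objects satisfying the property is colocalizing'' argument). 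The present proposition is purely the assembly of these two inputs.
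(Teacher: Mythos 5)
Your proof is correct and follows exactly the same route as the paper's: use Lemma~\ref{ayuda12.2} to get $\HOMqc_X(\CF,\CK(x))\in \Colset{\{x\}}\subset \Colset{Y}$ for $x\in Y$, then apply Lemma~\ref{ayuda12} to conclude. No differences worth noting.
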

\begin{proof}
By Lemma~\ref{ayuda12.2}, for any $x\in Y$,  $\HOMqc_X(\CF, \CK(x))\in \Colset{\{x\}}\subset \Colset{Y}$.
Let
$\SC$ be the full subcategory of $\Colset{Y}$ whose objects are those $\CG\in \Colset{Y}$ such that $\HOMqc_X(\CF,\CG)\in \Colset{Y}$. The category $\SC$ is a colocalizing  subcategory of $\Colset{Y}$  that contains the complexes $\CK(x)$, with $x\in Y$. Therefore $\SC=\Colset{Y}$.\qedhere
\end{proof}

\begin{cosa}\textbf{Hom coideals}.
Given  $\SC\subset \D_{\qc}(X)$ a colocalizing subcategory, we say that $\SC$ is $\CH$-\emph{coideal} if $\HOMqc_X(\CF, \CG ) \in \SC$ for all $\CG \in \SC$ and all $\CF \in \D_{\qc}(X)$.
\end{cosa}

On affine schemes, all colocalizing subcategories are $\CH$-coideals:

\begin{proposition}\label{allcoideals}
If $X=\spec(R)$ is an affine scheme then any colocalizing subcategory $\SC\subset \D_{\qc}(X)$ is an $\CH$-coideal.
\end{proposition}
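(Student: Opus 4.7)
The plan is to mirror the proof of Proposition \ref{allideals} dualized to the contravariant setting. Fix a colocalizing subcategory $\SC\subset \D_\qc(X)\cong \D(R)$, and consider the full subcategory
\[
\SC':=\big\{\,F\in \D(R)\,\big/\, \HOMqc_X(F,G)\in \SC \text{ for all } G\in \SC\,\big\}.
\]
What I want to show is that $\SC'=\D(R)$, which is exactly the $\CH$-coideal condition for $\SC$.

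First I would verify that $\SC'$ is a \emph{localizing} subcategory of $\D(R)$. It is stable under shifts and completion of triangles because $\HOMqc_X(-,G)$ is a contravariant $\Delta$-functor for every $G$. The crucial point is closure under coproducts: by the lemma following \ref{ayuda1}, the contravariant functor $\HOMqc_X(-,G)$ transforms coproducts into products. Hence if $\{F_i\}_{i\in I}$ is a family in $\SC'$, then
\[
\HOMqc_X\Big(\bigoplus_{i\in I} F_i,\, G\Big)\cong \productoqc_{i\in I}\HOMqc_X(F_i,G),
\]
which belongs to $\SC$ because $\SC$ is closed under products and each factor lies in $\SC$. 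So $\bigoplus_{i\in I} F_i \in \SC'$.

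Next I would check that $R\in \SC'$: since $R$ is the unit of the monoidal structure, $\HOMqc_X(R,G)\cong G\in \SC$ for every $G\in \SC$. Thus $\SC'$ is a localizing subcategory of $\D(R)$ containing the generator $R$, and therefore $\SC'=\D(R)$. This yields $\HOMqc_X(F,G)\in \SC$ for every $F\in \D_\qc(X)$ and $G\in\SC$, as required.

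There is no real obstacle here: the argument is the exact Brown-style dual of the proof for $\otimes$-ideals, and the only input used beyond general triangulated nonsense is the compact generation of $\D(R)$ by the single object $R$ together with the fact that $\HOMqc_X(-,G)$ sends coproducts to products. The one small point to watch is that the internal hom $\HOMqc_X$ on $\D_\qc(\Spec R)$ agrees, under the identification with $\D(R)$, with the usual $\R\Hom_R(-,-)$, so the adjunction and the coproduct-to-product property are inherited from the standard ones.
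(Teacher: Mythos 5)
Your proof is correct and is essentially the paper's own argument: the paper likewise defines the full subcategory of all $M\in\D(R)$ with $\R\Hom_R(M,N)\in\SC$ for every $N\in\SC$, observes it is localizing (via the coproduct-to-product property of $\R\Hom$) and contains the generator $R$, and concludes it is all of $\D(R)$. You merely spell out the details the paper leaves implicit.
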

\begin{proof} Again, for simplicity let us identify $\D_{\qc}(X)$ and $\D(R)$.
Note that 
\[
\SL:=\big\{\,\,M\in \D(R) \,\,\,\,\big{|}\,\,\,\, \R\Hom_R(M, N)\in \SC,\,\,\forall N\in \SC \,\,\big\}
\]
is a localizing subcategory of $\D(R)$ that contains $R$, then $\SL=\D(R)$.\qedhere
\end{proof}

\begin{proposition}\label{ayuda2}
Let $\SC	\subset \D_{\qc}(X)$ be an $\CH$-coideal colocalizing subcategory and $x\in X$ a point.  Then $\CK(x)\in \SC$ if, and only if, there exists a complex $\CG\in \SC$ such that $\HOMqc_X(\CK(x),\CG)\neq 0$.
\end{proposition}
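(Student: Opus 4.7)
The statement is the $\CH$-coideal counterpart of Proposition~\ref{hacenfaltaHomideales}, so my plan is to mimic that proof \emph{mutatis mutandis}, replacing the tensor product by the internal hom and exploiting the same projection-type formula via $\R {j_x}_*$.

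For the ``only if'' direction, take $\CG := \CK(x)$. Since the identity morphism of $\CK(x)$ is nonzero, $\R\Hom_X(\CK(x),\CK(x)) \neq 0$; by the coherator-hom adjunction this forces $\HOMqc_X(\CK(x),\CK(x)) \neq 0$, giving the required witness.

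For the ``if'' direction, fix $\CG\in\SC$ with $\HOMqc_X(\CK(x),\CG) \neq 0$. Since $\SC$ is an $\CH$-coideal, the object $\HOMqc_X(\CK(x),\CG)$ itself lies in $\SC$. The plan is to use the natural isomorphism recorded in the proof of Lemma~\ref{ayuda12.2},
\[
\HOMqc_X(\CK(x), \CG) \;\cong\; \R {j_x}_* \HOMqc_{\Spec(k(x))}\bigl(k(x), j_x^\times \CG\bigr) \;\cong\; \R {j_x}_*\, j_x^\times \CG,
\]
where the last identification uses that $\HOMqc_{\Spec(k(x))}(k(x), -)$ is the identity functor on $\D(k(x))$. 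Since $\R {j_x}_*$ is faithful (and in particular detects zero objects), $j_x^\times \CG$ is a nonzero complex of $k(x)$-vector spaces, hence $H^i(j_x^\times\CG) \neq 0$ for some $i\in\ZZ$; this exhibits $k(x)[-i]$ as a direct summand of $j_x^\times\CG$ in $\D(k(x))$.

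Applying $\R {j_x}_*$ to that splitting, $\CK(x)[-i] = \R {j_x}_* k(x)[-i]$ becomes a direct summand of $\R {j_x}_* j_x^\times\CG \cong \HOMqc_X(\CK(x),\CG) \in \SC$. By the Eilenberg swindle argument recalled at the start of Section~\ref{sec2}, colocalizing subcategories are stable under direct summands, so $\CK(x)[-i]\in\SC$, and the triangulated structure then yields $\CK(x)\in\SC$. The only delicate point is the identification of $\HOMqc_X(\CK(x),\CG)$ with $\R {j_x}_* j_x^\times\CG$ and the attendant passage between ``nonzero internal hom'' and ``nonzero stalk-type complex'', but this is precisely what Lemma~\ref{ayuda12.2} and the internal adjunction in~\ref{inverse-direct} are designed to supply.
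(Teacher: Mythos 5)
Your proposal is correct and follows essentially the same route as the paper's proof: the forward direction via $\CG=\CK(x)$, and the converse via the internal adjunction $\HOMqc_X(\CK(x),\CG)\cong \R {j_x}_*\HOMqc_{\Spec(k(x))}(k(x), j_x^\times\CG)$, splitting the resulting complex of $k(x)$-vector spaces, pushing forward, and using stability of $\SC$ under direct summands and shifts. Your explicit identification of $\HOMqc_{\Spec(k(x))}(k(x),-)$ with the identity is a harmless simplification of the same argument.
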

\begin{proof} For each point $x\in X$, $\HOMqc_X(\CK(x), \CK(x))\neq 0$, therefore if $\CK(x)\in \SC$, for $\CG=\CK(x)\in \SC$, $\HOMqc_X(\CK(x),\CK(x))\neq 0$.

Conversely, let $\CG\in \SC$ be such that $\HOMqc_X(\CK(x),\CG)\neq 0$. The colocalizing subcategory $\SC$ is $\CH$-coideal, then $\HOMqc_X(\CK(x),\CG)$ belongs to $\SC$, from the existence of the the isomorphism
\[
\HOMqc_X(\CK(x), \CF) \cong  
{j_x}_* \HOMqc_{\{x\}}\big(\widetilde{k(x)}, j_x^\times \CF\big),
\]
we conclude that $\CV:=\HOMqc_{\{x\}}\big(\widetilde{k(x)}, j_x^\times \CF\big)$ is a non acyclic complex of $k(x)$-vector spaces. Let $i\in \NN$ such that $H^i\CV\neq 0$, then $k(x)[-i]$ is a direct summand of $\CV$. Therefore $\CK(x)[-i] = {j_x}_*k(x)[-i]$ is a direct summand of ${j_x}_*\CV\in \SC$, and as a consequence $\CK(x)\in \SC$.
\end{proof}

\begin{corollary}\label{ayuda2.1}
Let $\SC \subset \D_{\qc}(X)$ be an $\CH$-coideal colocalizing subcategory. Given a point $x\in X$,  $\CK(x)\notin \SC$ if, and only if, $\HOMqc_X(\CK(x),\CG)= 0$ for all $\CG\in \SC$.
\end{corollary}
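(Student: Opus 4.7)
The plan is simply to observe that Corollary~\ref{ayuda2.1} is the logical contrapositive of Proposition~\ref{ayuda2}, which has just been established. Proposition~\ref{ayuda2} asserts the equivalence
\[
\CK(x)\in \SC \;\Longleftrightarrow\; \exists\, \CG \in \SC \text{ such that } \HOMqc_X(\CK(x),\CG)\neq 0.
\]
Negating both sides yields precisely the statement of the corollary. So the proof is a one-line restatement; there is no additional content and no real obstacle. I would simply write: ``This is the contrapositive of Proposition~\ref{ayuda2}.'' and close the proof.

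If a fuller write-up is desired, I would spell out both directions symmetrically. For the forward direction, assume $\CK(x)\notin \SC$; if there were some $\CG \in \SC$ with $\HOMqc_X(\CK(x),\CG) \neq 0$, Proposition~\ref{ayuda2} would force $\CK(x)\in \SC$, a contradiction, so $\HOMqc_X(\CK(x),\CG)=0$ for every $\CG\in \SC$. Conversely, if $\HOMqc_X(\CK(x),\CG)=0$ for all $\CG\in \SC$, then in particular no $\CG\in \SC$ witnesses non-vanishing, so again by Proposition~\ref{ayuda2} we must have $\CK(x)\notin \SC$.

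Since the argument is purely formal and relies entirely on the already-proven biconditional, I do not foresee any obstacle worth flagging. The substantive work — namely, extracting a direct summand $k(x)[-i]$ from the non-acyclic complex $\HOMqc_{\spec(k(x))}(k(x), j_x^\times \CG)$ and pushing it forward through $\R {j_x}_*$ to land inside $\SC$ — has already been carried out in the proof of Proposition~\ref{ayuda2}, where the $\CH$-coideal hypothesis was used to guarantee $\HOMqc_X(\CK(x),\CG)\in \SC$ in the first place. The corollary merely repackages that equivalence in a form better suited for the classification arguments to follow in Section~\ref{sec5}.
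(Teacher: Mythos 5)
Your proposal is correct and matches the paper, whose own proof simply notes that the corollary is equivalent to (the contrapositive of) Proposition~\ref{ayuda2}. Nothing further is needed.
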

\begin{proof}
The statement is equivalent to Proposition~\ref{ayuda2}.
\end{proof}

\begin{corollary}
\label{proposicion2}
Let $\SC	\subset \D_{\qc}(X)$ be an $\CH$-coideal colocalizing subcategory. The subsets
$X\setminus Y_\SC=\big\{\,\,x\in X \,\,|\,\,\CK(x)\in {}^\perp\SC\,\,\big\}$.
\end{corollary}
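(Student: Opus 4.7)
The plan is to derive this from Corollary~\ref{ayuda2.1} using an adjunction chain that converts vanishing of the internal hom $\HOMqc_X(\CK(x),\CG)\in\D_{\qc}(X)$ into vanishing of the abelian-group-valued $\R\Hom_X(\CK(x),\CG)\in\D(\Ab)$ appearing in the definition of ${}^\perp\SC$.

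One containment is immediate: if $\CK(x)$ were to belong to both $\SC$ and ${}^\perp\SC$, then setting $\CG=\CK(x)$ in the defining condition of the orthogonal would force $\R\Hom_X(\CK(x),\CK(x))=0$, contradicting the non-vanishing observed at the start of the proof of Lemma~\ref{Homvanishes}. Hence $\{\,x\in X\,/\,\CK(x)\in{}^\perp\SC\,\}\subseteq X\setminus Y_\SC$.

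For the reverse inclusion, assume $\CK(x)\notin\SC$, so that Corollary~\ref{ayuda2.1} yields $\HOMqc_X(\CK(x),\CG)=0$ for every $\CG\in\SC$. The goal is to promote this vanishing to $\R\Hom_X(\CK(x),\CG)=0$. I compute $H^n\R\Hom_X(\CK(x),\CG)=\Hom_{\D(X)}(\CK(x),\CG[n])$ and apply, in turn, the closed-category adjunction from~\ref{monstr} (with $\CO_X$ as the tensor unit) and the coherador adjunction from~\ref{cohfunc}:
\[
\Hom_{\D(X)}(\CK(x),\CG[n]) \cong \Hom_{\D(X)}(\CO_X,\rshom_X(\CK(x),\CG)[n]) \cong \Hom_{\D_{\qc}(X)}(\CO_X,\HOMqc_X(\CK(x),\CG)[n]),
\]
where the second isomorphism uses $\HOMqc_X(\CK(x),\CG)=\coherador\,\rshom_X(\CK(x),\CG)$ together with the fact that the coherador commutes with shifts. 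By hypothesis the right-hand side vanishes for every $n$, so $\R\Hom_X(\CK(x),\CG)=0$, i.e.\ $\CK(x)\in{}^\perp\SC$.

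No step should be the main obstacle: the argument is formal bookkeeping among the three different $\Hom$-functors in play. All the genuine geometric input has already been absorbed in Proposition~\ref{ayuda2} and its corollary, which rest on the identification $\HOMqc_X(\CK(x),\CF)\cong \R{j_x}_*\HOMqc_{\Spec(k(x))}(k(x),j_x^\times\CF)$.
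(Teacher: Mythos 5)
Your proposal is correct and follows the paper's route: the paper's proof is simply ``it is a restatement of Corollary~\ref{ayuda2.1}'', and you have supplied exactly the implicit bookkeeping, namely the unit/coherator adjunction converting the vanishing of $\HOMqc_X(\CK(x),\CG)$ into the vanishing of $\R\Hom_X(\CK(x),\CG)$, plus the trivial non-vanishing of $\R\Hom_X(\CK(x),\CK(x))$ for the easy containment.
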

\begin{proof}
It is a restatement of Corollary~\ref{ayuda2.1}. \qedhere
\end{proof}

\begin{proposition}
\label{CHcoideal}
Let $X$ be a point generated concentrated scheme. If $\SC$ is an $\CH$-coideal colocalizing subcategory of\/ $\D_{\qc}(X)$ then $\SC=\Colset{Y_\SC}$.
\end{proposition}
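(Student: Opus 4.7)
The inclusion $\Colset{Y_\SC} \subset \SC$ is immediate from the definitions: $\SC$ contains every $\CK(x)$ with $x \in Y_\SC$, and $\Colset{Y_\SC}$ is by construction the smallest colocalizing subcategory with this property. For the reverse inclusion I plan to mirror the strategy of Proposition~\ref{Ltensorideal}, swapping the roles of $\otimes^{\LL}_X$ and $\HOMqc_X$. Concretely, define the test category
\[
\SD := \bigl\{\, \CF \in \D_{\qc}(X) \,\,\big{/}\,\, \HOMqc_X(\CF, \CG) \in \Colset{Y_\SC}\ \text{for every}\ \CG \in \SC \,\bigr\}.
\]
Since $\HOMqc_X(-,\CG)$ converts coproducts in $\D_{\qc}(X)$ into products in $\D_{\qc}(X)$ (the lemma following \ref{ayuda1}) and $\Colset{Y_\SC}$ is stable under products, $\SD$ is closed under coproducts; together with the obvious stability under shifts and triangles this shows $\SD$ is a localizing subcategory.

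The critical step is that $\SD$ is in fact a $\otimes$-ideal, and this is exactly where the $\CH$-coideal hypothesis on $\SC$ is used. For $\CF \in \SD$, $\CM \in \D_{\qc}(X)$ and $\CG \in \SC$, the tensor--internal-hom adjunction in $\D_{\qc}(X)$ supplies
\[
\HOMqc_X(\CM \otimes^{\LL}_X \CF, \CG) \cong \HOMqc_X\bigl(\CF, \HOMqc_X(\CM, \CG)\bigr).
\]
The $\CH$-coideal property of $\SC$ places $\HOMqc_X(\CM, \CG)$ in $\SC$, and then membership of $\CF$ in $\SD$ places the right-hand side in $\Colset{Y_\SC}$. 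Hence $\CM \otimes^{\LL}_X \CF \in \SD$.

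Next I verify $\CK(x) \in \SD$ for every $x \in X$, splitting into two cases. If $x \in Y_\SC$, Lemma~\ref{ayuda12.2} gives $\HOMqc_X(\CK(x), \CG) \in \Colset{\{x\}} \subset \Colset{Y_\SC}$ for any $\CG$, in particular for every $\CG \in \SC$. If $x \notin Y_\SC$, i.e.\ $\CK(x) \notin \SC$, Corollary~\ref{ayuda2.1} yields $\HOMqc_X(\CK(x), \CG) = 0$ for every $\CG \in \SC$, which trivially belongs to $\Colset{Y_\SC}$. Thus $\Locset{X} \subset \SD$, and the well supported hypothesis $\Locset{X} = \D_{\qc}(X)$ forces $\SD = \D_{\qc}(X)$. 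In particular $\CO_X \in \SD$, so for every $\CG \in \SC$ one obtains $\CG \cong \HOMqc_X(\CO_X, \CG) \in \Colset{Y_\SC}$, giving $\SC \subset \Colset{Y_\SC}$. The main obstacle is identifying the correct auxiliary category: the point of $\SD$ is that its very definition is tuned so that the $\CH$-coideal property of $\SC$ is precisely what upgrades $\SD$ from a localizing subcategory to a $\otimes$-ideal one, allowing the well-supportedness argument of Proposition~\ref{Ltensorideal} to be transported verbatim to the colocalizing setting.
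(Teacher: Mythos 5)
Your proof is correct and follows essentially the same route as the paper: the auxiliary localizing subcategory $\SD$, the tensor--hom adjunction to make it a $\otimes$-ideal, the case split on $\CK(x)$ via Lemma~\ref{ayuda12.2} and Corollary~\ref{ayuda2.1}, and well-supportedness plus $\CO_X\in\SD$ to conclude. The only cosmetic difference is that in the $\otimes$-ideal step you invoke the $\CH$-coideal property of $\SC$ itself (placing $\HOMqc_X(\CM,\CG)$ in $\SC$), whereas the paper invokes the $\CH$-coideal property of $\Colset{Y_\SC}$ established just after Lemma~\ref{ayuda12}; both are valid.
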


\begin{proof} Denote $Y = Y_\SC$. The full subcategory of $\D_\qc(X)$ defined by
\[
\SD:=\big\{\,\,\CN\in\D_\qc(X)  \,\,\,\,\big{|}\,\,\,\, \HOMqc_X(\CN, \CG)\in \Colset{Y},\, \forall \CG\in \SC\,\,\big\}
\]
is a localizing subcategory.

Let us check now that the localizing subcategory $\SD$ is a $\otimes$-ideal. Given $\CN\in \SD$, by definiton, for any  $\CG\in \SC$, the complex $\HOMqc_X(\CN, \CG)$ belongs to  $\Colset{Y}$. Let us show that, for any $\CF\in \D_\qc(X)$, the complex $\CF\otimes^{\LL}_X \CN$ belongs to $\SD$. Indeed, the colocalizing subcategory $\Colset{Y}$ is $\CH$-coideal, and $\HOMqc_X(\CN, \CG)\in \Colset{Y}$ then $\HOMqc_X(\CF,  \HOMqc_X(\CN, \CG))\in \Colset{Y}$, for all $\CF\in \D_\qc(X)$. As a consequence, $\HOMqc_X(\CF\otimes^{\LL}_X \CN, \CG)\cong\HOMqc_X(\CF,  \HOMqc_X(\CN, \CG))$ belongs to $ \Colset{Y}$, for all $\CG\in \SC$.

Furthermore, it holds that $\CK(x)\in \SD$ for all $x\in X$. Indeed, if $x\in Y$, by Lemma \ref{ayuda12.2}, $\HOMqc_X(\CK(x), \CG)\in \Colset{\{x\}} \subset \Colset{Y}$ for any complex $\CG$ in $\D_\qc(X)$, so in particular $\CK(x)\in \SD$; if $x\notin Y$ then, by Corollary~\ref{proposicion2},  $\CK(x)\in {}^\perp\SC$, in this case, we have that  $\CK(x)\in \SD$ because $\HOMqc_X(\CK(x), \CG)=0\in \Colset{Y}$, for all $\CG\in \SC$.

The scheme $X$ is point generated,  thus $\SD=\D_\qc(X)$. In particular $\CO_X \in \SD$ and it follows that   $\CG=\HOMqc_X(\CO_X, \CG)$ belongs to  $\Colset{Y}$, for all $\CG\in \SC$.
We conclude that $\Colset{Y}=\SC$.
\end{proof}

\begin{theorem}\label{ThclasCB} Let $X$ be a point generated concentrated scheme. There is a bijection
\[
\left\{    
    \begin{gathered}    
        \text{ Colocalizing }\\
        \CH\text{-coideals of\/ } \D_{\qc}(X)
    \end{gathered}
   \right\}
\xtofrom[\Col]{\SY}
\left\{  
\begin{gathered}    
        \text{ Subsets of }\\
        X
    \end{gathered}
   \right\}
\]
where the map $\Col$ takes a subset $Y \subset X$ into the colocalizing subcategory $\Colset{Y}$ generated by the set $\{\CK(x) \,\,|\,\, x \in Y\}$ and $\SY$ assigns to an $\CH$-coideal colocalizing subcategory $\SC$, the set  $\SY(\SC) := Y_\SC=\{\,x\in X \,\,|\,\, \CK(x)\in \SC\,\}$.
\end{theorem}

\begin{proof}
It is clear that $Y=\big\{\,x\in X \,\,|\,\, \CK(x)\in \Colset{Y}\,  \big\}$ by Proposition \ref{YandColY}.
In the converse sense, by Proposition~\ref{CHcoideal}, $\SC=\Colset{Y_\SC}$ for any $\CH$-coideal colocalizing  subcategory $\SC\subset \D_\qc(X)$.
\qedhere
\end{proof}

\begin{proposition} \label{trestrestres} Let $X$ be a point generated concentrated scheme. The correspondence $\SL\mapsto \SL^{\perp}$  defines a bijection  between $\otimes$-ideal localizing subcategories of $\D_\qc(X)$ and $\CH$-coideal colocalizing subcategories, with inverse the correspondence $\SC\mapsto {}^\perp\SC$.
\end{proposition}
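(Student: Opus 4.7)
The plan is to use the classification Theorems \ref{ThclasB} and \ref{ThclasCB}, which both parameterize the relevant subcategories by subsets of $X$, and then identify the orthogonality operations $\SL \mapsto \SL^\perp$ and $\SC \mapsto {}^\perp\SC$ with set-theoretic complementation $W \mapsto X \setminus W$ on $X$. Since complementation is a self-inverse bijection on the power set of $X$, the result will follow at once.

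First I would verify that the orthogonals land in the correct enriched classes. For a $\otimes$-ideal localizing subcategory $\SL$, the category $\SL^\perp$ is colocalizing by \ref{ortcoloc}; to see it is an $\CH$-coideal, fix $\CG \in \SL^\perp$ and $\CF \in \D_\qc(X)$. The internal $\otimes$--$\HOMqc$ adjunction from \ref{ayuda1} yields, for every $\CN \in \SL$,
\[
\R\Hom_X(\CN, \HOMqc_X(\CF, \CG)) \cong \R\Hom_X(\CN \otimes^{\LL}_X \CF, \CG) = 0,
\]
since $\CN \otimes^{\LL}_X \CF \in \SL$ by the $\otimes$-ideal hypothesis; hence $\HOMqc_X(\CF, \CG) \in \SL^\perp$. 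Dually, if $\SC$ is an $\CH$-coideal colocalizing subcategory, $\CF \in {}^\perp \SC$, $\CM \in \D_\qc(X)$ and $\CG \in \SC$, the same adjunction shows
\[
\R\Hom_X(\CM \otimes^{\LL}_X \CF, \CG) \cong \R\Hom_X(\CF, \HOMqc_X(\CM, \CG)) = 0,
\]
because $\HOMqc_X(\CM, \CG) \in \SC$. Thus ${}^\perp \SC$ is a $\otimes$-ideal localizing subcategory.

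Finally I would apply the two classifications. Given $Z \subset X$, $\Locset{Z}^\perp$ is $\CH$-coideal colocalizing by the previous step, so Theorem \ref{ThclasCB} gives $\Locset{Z}^\perp = \Colset{Y}$ with $Y = \{x \in X : \CK(x) \in \Locset{Z}^\perp\}$, which equals $X \setminus Z$ by Proposition \ref{ZandLocZ}. Symmetrically, ${}^\perp \Colset{Y} = \Locset{X \setminus Y}$ using Theorem \ref{ThclasB} together with Proposition \ref{YandColY}. Writing an arbitrary $\otimes$-ideal as $\SL = \Locset{Z_\SL}$ we obtain ${}^\perp(\SL^\perp) = {}^\perp\Colset{X \setminus Z_\SL} = \Locset{Z_\SL} = \SL$, and symmetrically $({}^\perp \SC)^\perp = \SC$ for any $\CH$-coideal colocalizing $\SC$. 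There is no genuine obstacle: the enrichment properties of the orthogonals drop out of the tensor--hom adjunction, and the bijection itself is simply the composition of the two established classification bijections with complementation in $X$.
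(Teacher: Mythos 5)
Your proposal is correct and follows essentially the same route as the paper: the tensor--$\HOMqc$ adjunction shows that $\SL^\perp$ is an $\CH$-coideal and ${}^\perp\SC$ a $\otimes$-ideal, and then Theorems \ref{ThclasB} and \ref{ThclasCB} combined with Propositions \ref{ZandLocZ} and \ref{YandColY} identify both orthogonality maps with complementation in $X$, making them mutually inverse. The only difference is that you write out the adjunction computation that the paper leaves implicit.
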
 

\begin{proof}
By the tensor-Hom adjunction, if $\SL$ is a $\otimes$-ideal localizing subcategory of $\D_\qc(X)$ then $\SL^\perp$ is an $\CH$-coideal colocalizing subcategory and, reciprocally, if $\SC$ is an $\CH$-coideal colocalizing subcategory of $\D_\qc(X)$ then ${}^\perp\SC$ is a $\otimes$-ideal localizing subcategory. 

Using Theorem \ref{ThclasB}, $\SL = \Locset{Z}$ for a certain $Z \subset X$. Moreover, as $\SL$ is a $\otimes$-ideal, $\SL^\perp$ is an $\CH$-coideal and therefore $\SL^\perp = \Colset{Y}$ where $Y = X \setminus Z$ in view of Proposition \ref{ZandLocZ}. If $\SC$ is an $\CH$-coideal colocalizing subcategory then $\SC = \Colset{Y}$ for a certain $Y \subset X$ by Theorem \ref{ThclasCB}, and by Proposition \ref{YandColY} we obtain that ${}^\perp\SC = \Locset{X \setminus Y}$. As a consequence,  ${}^\perp(\SL^\perp) = \SL$ and $({}^\perp\SC)^\perp = \SC$, thus the correspondences are mutually inverse.
\end{proof}

\begin{corollary}\label{corfin}
Let $X$ be a point generated concentrated scheme. For any partition  $X=Z\cup Y$, it holds that $\Locset{Z}^\perp = \Colset{Y}$ and $\Locset{Z}={}^\perp\Colset{Y}$.
\end{corollary}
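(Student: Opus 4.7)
The plan is to obtain both identities as an immediate consequence of the preceding proposition together with the classifications in Theorems~\ref{ThclasB} and~\ref{ThclasCB}. Given a partition $X = Z \cup Y$, so that $Y = X \setminus Z$, the subcategory $\Locset{Z}$ is a $\otimes$-ideal localizing subcategory by Proposition~\ref{LZtensorideal}. Therefore, the preceding proposition guarantees that $(\Locset{Z})^\perp$ is an $\CH$-coideal colocalizing subcategory.

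To identify which subset of $X$ parametrizes $(\Locset{Z})^\perp$ under Theorem~\ref{ThclasCB}, I would compute the set of points $x\in X$ such that $\CK(x)\in (\Locset{Z})^\perp$. By Proposition~\ref{ZandLocZ}, this set is precisely $X\setminus Z = Y$. Since Theorem~\ref{ThclasCB} asserts that an $\CH$-coideal colocalizing subcategory is uniquely determined by the set of points whose residue-field sheaves it contains, this forces $(\Locset{Z})^\perp = \Colset{Y}$.

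For the second identity I would proceed symmetrically: since $\Colset{Y}$ is $\CH$-coideal colocalizing, the proposition says ${}^\perp(\Colset{Y})$ is $\otimes$-ideal localizing; Proposition~\ref{YandColY} identifies the subset of $X$ indexing it as $X\setminus Y = Z$, whence ${}^\perp(\Colset{Y}) = \Locset{Z}$. Alternatively, the second identity is equivalent to the first by applying ${}^\perp(-)$ and invoking the mutual-inverse bijection of the preceding proposition.

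The only subtle point is keeping track of which subset labels which subcategory, and this is handled by the point-wise descriptions in Propositions~\ref{ZandLocZ} and~\ref{YandColY}. No serious obstacle arises; the corollary is essentially a translation of the preceding proposition through the two classification theorems.
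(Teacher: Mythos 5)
Your proposal is correct and follows essentially the same route as the paper: the paper's proof of the preceding proposition already identifies $(\Locset{Z})^\perp$ as an $\CH$-coideal colocalizing subcategory whose point set is $X\setminus Z$ via Proposition~\ref{ZandLocZ} (and dually via Proposition~\ref{YandColY}), and then concludes by the classification Theorems~\ref{ThclasB} and~\ref{ThclasCB}, which is exactly your argument. No gaps.
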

\begin{proof}
Follows form the proof of the previous Proposition.
\end{proof}

\section{Further remarks on supports and point generation}\label{sec6}

The following remarks will relate the notion of point generated scheme with some other conditions more general than being a noetherian scheme.

\begin{cosa} Let $X$ be a scheme and $\CF \in \D_\qc(X)$. Let us recall the definition of its (small) support
\[
 \supp(\CF) := \{\,x\in X \,\, | \,\, \CK(x)\otimes^{\LL}_X \CF \neq 0\,\}
 \]
 With this in mind, we say that a scheme $X$ satisfies the \emph{support property} if the following condition holds for $\CF \in \D_\qc(X)$
 \[
\left[ \CK(x)\otimes^{\LL}_X \CF=0,\ \forall\ x\in X \right] \limp \CF = 0, 
 \]
 in other words, $\CF = 0$ if and only if $ \supp(\CF) = \varnothing $.
\end{cosa}

\begin{lema}
 The support property for a scheme $X$ is equivalent to ${}^\perp\Colset{X} = 0$.
\end{lema}

\begin{proof}
 Let $x \in X$ and $j \colon \Spec(k(x))\to X$ the canonical map. For any $\CF \in \D_\qc(X)$,
 \[
\CK(x)\otimes^{\LL}_X \CF=j_*k(x)\otimes^{\LL}_X \CF\cong j_*(k(x)\otimes_{\{x\}}^{\LL}{\LL}j^*\CF)=j_*{\LL}j^*\CF.
\]
Therefore, it follows that:
\begin{align*}
\CK(x)\otimes^{\LL}_X \CF=0 \quad
            \ldimp &\quad {\LL}j^*\CF=0\\
            \ldimp &\quad  \R\Hom_{k(x)}({\LL}j^*\CF, k(x))=0\\
            \ldimp &\quad \R\Hom_{X}(\CF, j_*\widetilde{k(x)})=\R\Hom_{X}(\CF, \CK(x)) = 0
\end{align*}
So $X$ satisfies the support property if and only if ${}^\perp\Colset{X} = 0$.
\end{proof}

\begin{proposition}
 If $X$ is point generated concentrated scheme then $X$ satisfies the support property.
\end{proposition}

\begin{proof}
 By Proposition \ref{trestrestres}, If $X$ is point generated, then ${}^\perp\Colset{X} = 0$.
\end{proof}

\begin{cosa}\textbf{Bousfield classes as localizing subcategories}. Let $X$ be a concentrated scheme. Given $\CE \in \D_\qc(X)$, let $\D_\qc(X)_{\CE}$ be the Bousfield class defined by $\CE$,
\[
\D_\qc(X)_{\CE} =  \{\,\CF \in \D_\qc(X)\,\,|\,\,\CF\otimes^{\LL}_X \CE=0\}.
\]
All Bousfield classes are clearly $\otimes$-ideal localizing subcategories.

If $X$ is point generated, it is a consequence of Corollary \ref{corfin} that 
\[
\D_\qc(X)_{\CE} = \Locset{X \setminus \supp(\CE)}.
\]
\end{cosa}

\begin{cosa}\textbf{Bousfield classes and the support property}.
 We restrict our discussion to the affine case. Let $R$ be a commutative ring and $E \in \D(R)$. The Bousfield class in $\D(R)$ associated to $E$ is the full subcategory
 \[
\D(R)_E = \{\,M\in\D(R) \,\,|\,\, M\otimes^{\LL}_R E=0\}.
\]

In \cite{Oddball}, Neeman gave an example of a non-noetherian ring $R$ with a single prime ideal whose set of Bousfield classes is infinite. Let us describe it briefly. Let $R$ be the ring defined by
\[
R=\dfrac{k[x_2,x_3,\ldots \,]}{\langle x_2^2,x_3^3,\ldots \rangle},
\]
where $k$ is a field and $\{x_2,\,x_3,\ldots\}$ is a countable set of variables.
The ring $R$ is local with maximal ideal $\im=\langle\{x_i\}_{i\geq 2}\rangle$; furthermore, all elements of $\im$ are nilpotent, so $\im$ is the only prime ideal of $R$, that is, $\spec(R)$ has a single point.
However, $\D(R)$ has infinitely many Bousfield classes and hence infinitely many localizing subcategories. In particular, the affine scheme $\spec(R)$ is not generated by points; moreover, as we will see, $\spec(R)$ does not satisfy the support property either.

Let us recall the construction of Bousfield’s classes in \cite{Oddball}. Set $\NN^* := \NN\setminus \{0\}$. Let $P_0$ be the set of functions $f\colon \NN^* \to \NN^*$ such that $f(j)\leq j$ for any $j\in \NN^*$. Consider the following preorder relation on $P_0$
\[
f\preceq g \,\Longleftrightarrow \, \big[\,\,f(j)\leq g(j),\,\forall j\gg 1\,\,\big].
\]
For each $f \in P_0$, denote
\[
E_f=\dfrac{R}{\langle x_2^{{}^{f(2)}},x_3^{{}^{f(3)}},\ldots\rangle}.
\]
The equivalence relation on $P_0$ associated with the preorder is defined, for $f, g \in P_0$, as $f \sim g$ if $f \preceq g$ and $g \preceq f.$ Let $P := P_0 / \sim$ be the quotient set. The set $P$ is ordered by the relation induced from the preorder on $P_0$. Let $\mathcal{B}$ denote the Bousfield classes of $\D(R)$ ordered by the inclusion relation. The mapping
\begin{align*}
   \psi\colon P  & \lto \mathcal{B} \\
             [f] & \lto \D(R)_{E_f}
\end{align*}
is injective (see \cite[Lemma 2.6]{Oddball}) and reverses the order. The set $P$ has at least $2^{\aleph_0}$ elements; therefore, there are infinitely many Bousfield classes in $\D(R)$.

Let $\mathbf{1}\colon \NN^* \to \NN^*$ be the constant map taking the value $1$. The class $[\mathbf{1}]$ is the minimum of the set $P$. Therefore $\D(R)_{E_{\mathbf{1}}}$ is the largest of the Bousfield classes in the set $\Img(\psi)$, in particular $\D(R)_{E_{\mathbf{1}}}\neq 0$. As $E_{\mathbf{1}}=k(\im)$, it follows that 
\begin{equation*}
\exists M\in \D(R), \text{ such that } M\neq 0 \text{ and } M\otimes^{\LL}k(\im)=0.
\end{equation*}
Therefore, the affine scheme $\spec(R)$ does not satisfy the support property.
\end{cosa}

\begin{cosa}
\label{sumup} 
Let us summarize the relationship between the previous conditions on concentrated schemes
\[
\text{noetherian} \limp
\text{point generated} \limp
\text{support property}
\]
We will see in the next paragraph that both implications are strict. Moreover, the weaker condition, the support property, does not always hold even for affine schemes as it is shown by Neeman's previously discussed example. The failure of this condition is related to the complexity of the nilradical.
\end{cosa}

\begin{cosa}\textbf{Absolute flat rings and point generation}.
We stay in the affine case. A ring $R$ is \emph{absolutely flat}  (or \emph{Von Neumann regular}) if $\langle r\rangle=\langle r^2\rangle$, for all $r\in R$. An absolutely flat ring $R$ satisfies the support property because $R_\ip = k(\ip)$, for all $\ip \in \spec(R)$. The affine scheme $\spec(R)$ associated to a non-noetherian, absolutely flat ring $R$ is a zero-dimensional, Hausdorff and totally disconnected scheme and therefore is not a topologically noetherian space.

The inclusion of the subcategory of absolutely flat commutative rings into the category of commutative rings admits a left adjoint, which associates to a commutative ring $S$ an absolutely flat ring defined by
\[
S^{\text{abs}} = \dfrac{S[\,x_s\, \mid\, s\in S\,]}{\big\langle \,\{\,s^2x_s-s, x_s^2s-x_s\, \mid\, s\in S\,\}\,\big\rangle}.
\]
In \cite[Proposition 4.18]{St14} it is shown that the affine scheme $\spec(R)$ where  $R=S^{\text{abs}}$ is point generated if $S$ is such that $\spec(S)$ is a noetherian topological space, thus it provides an example of a non noetherian point generated scheme. This gives a counterexample to the converse of the first implication in \ref{sumup}.

For the counterexample to the second implication let us recall the following notion. A ring $R$ is semi-artinian if, for any proper ideal $\ia\subset S$, the module $R/\ia$ contains a simple $R$-submodule. In \cite[Theorem 4.8]{St14}, Stevenson proves that if $R$ is an absolutely flat and non-semi-artinian ring, then $X=\spec(R)$ is not generated by points. The key fact is the existence of an non-zero injective $R$-module $E$, which is right-orthogonal to all residue fields of $R$, hence $E \notin \Locset{\{k(\ip)\ |\ \ip\in\spec(R)\}}$.

An example of a completely flat, non-semi-artinian ring is as follows. Given an infinite set $\Lambda$ and a family of fields $\{\,k_\lambda \,\,|\,\, \lambda\in \Lambda\}$ denoted by $\Lambda$, the ring
\[
R:=\Pi_{\lambda\in \Lambda}k_\lambda,
\]
is absolutely flat and not semi-artinian. Therefore, $\spec(R)$ is a scheme not generated by points on which the support condition holds. 
\end{cosa}

\begin{remark}
 Our approach differs substantially to the one taken in \cite{bchs}. Essentially, the authors consider certain \emph{idempotent functors} determined by localization and acyclization functors determined by compact objects that play the role of (sheaves associated to) points in the Balmer spectrum of the category. In the context of schemes, these objects would correspond to a localization of a Koszul complex associated to the ideal of $\overline{\{x\}}$ for a point $x$ of the scheme. These complexes are \emph{not} isomorphic to the sheaves $\CK(x)$ used here, in general.

What underlies all this is that in the scheme case we get a system of \emph{points} whose associated derived categories have a remarkable simple structure with respect to localizations and colocalizations. The issue of defining residue fields in general tensor triangulated geometry is delicate as Cameron and Stevenson reference \cite{CS} attests.
\end{remark}



\end{document}